\newtheorem{thm}{Theorem}[section]
\newtheorem{cor}[thm]{Corollary}
\newtheorem{lem}[thm]{Lemma}
\newtheorem{prop}[thm]{Proposition}
\newtheorem{assu}[thm]{Assumption}
\theoremstyle{definition}
\newtheorem{de}[thm]{Definition}
\theoremstyle{remark}
\numberwithin{equation}{section}
\newcommand{\Rmnum}[1]{\expandafter\@slowromancap\romannumeral #1@}
\begin{document}

\title{Polynomial Mixing for a Weakly Damped Stochastic Nonlinear Schr\"{o}dinger Equation}

\author{Jing Guo}
\address{J. Guo: School of Mathematics,
Dalian University of Technology, Dalian 116024, P. R. China}
 \email{guojing062@mail.dlut.edu.cn; jingguo062@hotmail.com}

\author{Zhenxin Liu}
\address{Z. Liu (Corresponding author): School of Mathematics,
Dalian University of Technology, Dalian 116024, P. R. China}
\email{zxliu@dlut.edu.cn}

\date{March 31, 2023}

\subjclass[2010]{35Q55, 35Q60, 37H99, 60H15.}

\keywords{Stochastic damped nonlinear Schr\"{o}dinger equation; Uniqueness of invariant measure;
Polynomial mixing; Coupling; Girsanov theorem.}

\begin{abstract}
This paper is devoted to proving the polynomial mixing for a weakly damped stochastic nonlinear Schr\"{o}dinger equation with additive noise on a 1D bounded domain. The noise is white in time and smooth in space. We consider both focusing and defocusing nonlinearities, respectively, with exponents of the nonlinearity $\sigma\in[0,2)$  and $\sigma\in[0,\infty)$ and prove the polynomial mixing which implies the uniqueness of the invariant measure by using a coupling method.
\end{abstract}

\maketitle

\section{Introduction}
\setcounter{equation}{0}

The nonlinear Schr\"{o}dinger equation (NLS) is one of the basic nonlinear partial differential equations, which models the propagation of dispersive nonlinear waves. It arises in various areas of physics such as hydrodynamics, optics and plasma physics. Given that randomness and damping has to be taken into account in some circumstances, we need to consider the damped stochastic nonlinear Schr\"{o}dinger equation (SNLS). It is valid for describing waves in long propagation distances. It has the following form
\begin{equation}\label{a1}
\mathrm{d}u(t)= (i\Delta u(t)+i\lambda|u(t)|^{2\sigma}u(t)-\alpha u(t))\mathrm{d}t+b\mathrm{d}W(t),
\end{equation}
where $\alpha>0$, $x\in [0,1]$, $\lambda\in\{1, -1\}$, $u(x,t)$ is a complex-valued unknown function. $\lambda=1$ corresponds to the focusing case and $\lambda=-1$ corresponds to the defocusing case.

It is not difficult to see that if the well-posedness of the NLS has been proved, then for the equation with damping, it can also be easily derived. Therefore, we only recall the relative results on the well-posedness of the NLS. For the deterministic equation, it is well-known that all the solutions exist globally in the subcritical ($\sigma d<2$) case. The first proof of this subject was given by J. Ginibre and G. Velo \cite{GV}. There are also many references, see for example \cite{HNT, KT, W} and references therein. For the stochastic equation, the well-posedness is more difficult to get. A. de. Bouard and A. Debussche \cite{BD1999} demonstrated the local and global existence and uniqueness of square integrable solutions to the focusing NLS with linear multiplicative noise in $\mathds{R}^{n}$ by using the fixed point theorem. They considered the subcritical nonlinearities, where the critical exponent is the same as that of the deterministic equation in dimension 1 or 2, but more restrictive if $n\geq3$. And they investigated the existence and uniqueness of solutions to the NLS with multiplicative or additive noise in $H^{1}(\mathds{R}^{n})$ in \cite{BD2003}. Similarly, the result is more restrictive than that of the deterministic equation. After that by making use of the rescaling approach, V. Barbu et al. proved well-posedness results for the NLS with linear multiplicative noise in $L^{2}(\mathds{R}^{n})$ in the conservative case and nonconservative case in \cite{BRZ2014}. They obtained the result for the subcritical equation with exponents of nonlinearity in the same range as the deterministic case, which improved the results of \cite{BD1999} in the conservative case. And in \cite{BRZ2016}, they discussed the well-posedness of the equation with linear multiplicative noise in $H^{1}(\mathds{R}^{n})$ in the conservative and non-conservative case and considered focusing and defocusing nonlinearities whose exponents are in the same range as the deterministic case, which improved the results of \cite{BD2003} in the special conservative case. For the nonlinear noise, F. Hornung presented the local existence and uniqueness of a solution to the SNLS with subcritical and critical nonlinearities and the global existence and uniqueness of the solution to the equation in the subcritical case under an additional assumption on the nonlinear noise in \cite{H2018}. Besides, in \cite{BM}, Z. Brze\'{z}niak and A. Millet proved the existence and uniqueness of a solution to the SNLS on a two-dimensional compact Riemannian manifold by using stochastic Strichartz estimates. In fact, all the above-mentioned references are about mild solutions. For martingale solutions, we refer to \cite{BHL, BHM, H2020}. And for variational solutions, see \cite{GL, KL2015, KL2016}.

Moreover, as for the existence of an invariant measure for the damped SNLS, there are a few studies. I. Ekren et al. proved the existence of an invariant measure for the NLS with additive noise in $H^{1}(\mathds{R}^{n})$ and the existence of an ergodic measure in \cite{EKM}. J. U. Kim also studied the existence of an invariant measure for the damped SNLS in \cite{KJ}. Z. Brze\'{z}niak et al. considered this equation with defocusing nonlinearity on a 2D compact Riemannian manifold and proved the existence of an invariant measure in \cite{BFZ1}. Besides, they provided some remarks on the uniqueness of the invariant measure in a particular case. But as far as we know, only few studies have shown the uniqueness of an invariant measure for the equation. By using a coupling method, A. Debussche and C. Odasso \cite{DO} proved the uniqueness of an invariant measure for the equation with cubic nonlinearities on a 1D bounded domain. They also revealed that the mixing property holds and that the rate of convergence is at least polynomial of any power. Recently, in \cite{BFZ2}, Z. Brze\'{z}niak et al. proved the uniqueness of the invariant measure for the equation when the damping coefficient is sufficiently large in $\mathds{R}^{n}$ with $n=2$ or $n=3$. In this paper, we focus on proving the uniqueness of an invariant measure for the focusing and defocusing damped SNLS, respectively, with exponents of the nonlinearity $\sigma\in[0,2)$ and $\sigma\in[0,\infty)$ on a 1D bounded domain. In particular, our work generalizes the earlier result of \cite{DO}, where $\sigma=1$ and $\lambda=1$.

In this work, we will use a coupling method to prove the polynomial mixing which implies the uniqueness of the invariant measure. We remark that the result about the existence of an invariant measure can also be obtained by the Krylov-Bogolyubov theorem. To be specific, due to the domain we consider is bounded, we can use some compactness theorem. Besides, we will also need some extra estimates about the solution. Moreover, as far as we know, there are mainly two kinds of methods to prove the uniqueness of the invariant measure. The first one is the Doob theorem or the general Doob theorem. We refer to \cite{DZ1996, HM}. The second one is the coupling method. Due to the lack of smoothing effect in the NLS, we will use the coupling method which can also be used to obtain the rate of convergence, and we will restrict to the case, where only a finite number of modes are forced. But for a more degenerate noise, we cannot deal with it now.

In this paper, as in \cite{DO}, we extensively use the decomposition of $u$ into its low- and high-frequency parts. We assume that the low-frequency part is non-degenerate, but the high-frequency part may be degenerate. We can prove when the low-frequency parts of two solutions from two different initial data are equal, their high-frequency parts will be close, which is the so-called Foias-Prodi estimate. Since the damped SNLS is weakly dissipative, we cannot prove a path-wise Foias-Prodi estimate. We can only prove that it holds on average. Moreover, we are unable to prove the exponential estimate of the growth of solutions in our case. Since the Lyapunov structure is more complicated here, we can only prove the polynomial estimate of the growth of solutions. Therefore, we can only prove that convergence to equilibrium holds with the polynomial speed at any power.

The paper is organized as follows. In sect.2, we present some notations, assumptions and main results. Sect.3 gives some useful priori estimates which will be used to prove the main results and proves the uniqueness of the invariant measure.

\section{Preliminaries and main results}
\setcounter{equation}{0}
\subsection{Notations and assumptions}

We set $A=-\Delta$, $D(A)=H_{0}^{1}([0,1])\bigcap H^{2}([0,1])$. The damped SNLS with the initial data $u_{0}$ and Dirichlet boundary conditions can be written in the form:
\begin{equation}\label{a2}
\begin{cases}
du(t)=-iAu(t)dt+i\lambda|u(t)|^{2\sigma}u(t)dt-\alpha u(t)dt+bdW(t), \quad t\geq0 \\
u(0)=u_{0}\in H_{0}^{1}([0,1]).
\end{cases}
\end{equation}
Let $\left(\Omega,\mathcal{F},\{\mathcal{F}_{t}\}_{t\geq 0},\mathbb{P}\right)$ be a filtered probability space where $\left\{\mathcal{F}_{t}\right\}_{t\geq 0}$ is a filtration satisfying the usual conditions, i.e. $\left\{\mathcal{F}_{t}\right\}_{t\geq 0}$ is right continuous and complete. We recall that a stochastic process $X(t)$ is called non-anticipating with respect to $\left\{\mathcal{F}_{t}\right\}_{t\geq 0}$ if the function $(t,\omega)\mapsto X(t,\omega)$ is measurable, and for each $t\geq0$, $X(t)$ is adapted to $\mathcal{F}(t)$, i.e. $X(t)$ is $\mathcal{F}(t)$-measurable. We denote by $b$ a linear operator on $L^{2}([0,1])$. Let $W$  be a cylindrical Wiener process on $L^{2}([0,1])$. We denote by $\{\mu_{n}\}_{n\in \mathds{N}}$ the increasing sequence of eigenvalues of $A$  and by $\{e_{n}\}_{n\in \mathds{N}}$ the associated eigenvectors. Also, Let $P_{N}$ and $Q_{N}$ be the eigenprojectors onto the space $span\ \{\{e_{k}\}_{0\leq k\leq N}\}$ and onto its complementary space, respectively. We use the Lebesgue space of complex valued functions $L^{p}([0,1])$ endowed with the norm $|\cdot|_{p}$, and the inner
product in $L^{2}([0,1])$ is denoted by $(u,v)=\mathcal{R}\int_{0}^{1}u(x)\bar{v}(x)dx$ for any $u$, $v\in L^{2}([0,1])$, where $\bar{v}$ is the conjugate of $v$ and $\mathcal{R}u$ is the real part of $u$. Let $H^{s}([0,1])$ be the Sobolev space endowed with the norm $\|\cdot\|_{s}$.
For $s\geq 0$, it is not difficult to see that $D\left(A^{\frac{s}{2}}\right)$ is a closed subspace of $H^{s}\left([0,1]\right)$ and $\left\|\cdot\right\|_{s}=\left|A^{\frac{s}{2}}\cdot\right|_{2}$ is equivalent to the usual $H^{s}([0,1])$ norm on this space. Moreover, for any $u\in H^{s}([0,1])$,
\begin{center}
 $D(A^{\frac{s}{2}})=\left \{u=\sum\limits_{k\in \mathds{N}}(u,e_{k})e_{k}\in L^{2}([0,1]) \ \bigg| \ \sum\limits_{k\in \mathds{N}}\mu_{k}^{s}(u,e_{k})^{2}<\infty \right \}$ \indent \ and \  \indent $\|u\|_{s}^{2}=\sum\limits_{k\in \mathds{N}}\mu_{k}^{s}(u,e_{k})^{2}$.
\end{center}

We work under the following assumptions on the noise and the nonlinearity.
\begin{assu}\label{s2}
We suppose that $b$ commutes with $A$, i.e. suppose that $b$ is diagonal in the basis $\{e_{n}\}_{n\in \mathds{N}}$, and we write
$be_{n}=b_{n}e_{n}$, where $b_{n}=(be_{n},e_{n})$.
Moreover, we assume that there exists $N_{*}>0$ such that
$b_{n}>0$ for any $n\leq N_{*}$.
\end{assu}
For any $s\in[0,3]$, we denote by $\mathcal{L}_{2}(L^{2}([0,1]), D(A^{\frac{s}{2}}))$ the space of Hilbert-Schmidt operators from $L^{2}([0,1]$ to $D(A^{\frac{s}{2}})$. Let $b\in\mathcal{L}_{2}(L^{2}([0,1]), D(A^{\frac{3}{2}}))$. We set
$B_{s}:=|b|_{\mathcal{L}_{2}(L^{2}([0,1]),D(A^{\frac{s}{2}}))}^{2}=\sum_{n=0}^{\infty}\mu_{n}^{s}b_{n}^{2}$ for any $s\in[0,3]$.
\begin{assu}\label{s3}
\indent \indent If $\lambda=1$, then $\sigma\in[0,2)$. \\
\indent \indent If $\lambda=-1$, then $\sigma\in[0,\infty)$.
\end{assu}

We use $H_{*}(u)$ to denote the energy, where
$H_{*}(u)=\frac{1}{2}|\nabla u|_{2}^{2}-\frac{\lambda}{2\sigma+2}|u|_{2\sigma+2}^{2\sigma+2}$. It is not difficult to see that when $\lambda=-1$, it is greater than or equal to zero. When $\lambda=1$, it may be negative. But we can modify it by adding a term and recover its nonnegative property. We also denote by $H(u):=H_{*}(u)$ the energy in the former case. And we denote by $H(u):=H_{*}(u)+G|u|_{2}^{2+\frac{4\sigma}{2-\sigma}}$ the modified energy in the latter case, where $G$ is a constant satisfying the inequality
\begin{align}\label{s1}
|u|_{2\sigma+2}^{2\sigma+2}\leq \frac{1}{2\sigma+2}|\nabla u|_{2}^{2}+\frac{G}{2}|u|_{2}^{2+\frac{4\sigma}{2-\sigma}}.
\end{align}
The existence of $G$ can be guaranteed by Gagliardo-Nirenberg's inequality and Young's inequality. When $\lambda=1$, we get
\begin{align}\label{s4}
H(u)&=\frac{1}{2}|\nabla u|_{2}^{2}-\frac{1}{2\sigma+2}|u|_{2\sigma+2}^{2\sigma+2}+G|u|_{2}^{2+\frac{4\sigma}{2-\sigma}} \\ \nonumber
&\geq \frac{2\sigma(\sigma+2)}{(2\sigma+2)^{2}}|\nabla u|_{2}^{2}+\frac{1}{2\sigma+2}|u|_{2\sigma+2}^{2\sigma+2}+\frac{2\sigma+1}{2\sigma+2}G|u|_{2}^{2+\frac{4\sigma}{2-\sigma}}.
\end{align}

We also use the following quantities. We define
$E_{u,k}(t,s)=H^{k}(u(t))+\frac{1}{2}\alpha k\int_{s}^{t}H^{k}(u(r))dr$, $t\geq s$. When $s=0$, we simply write $E_{u,k}(t)$. When $\lambda=1$, we define for any $(u_{1},u_{2},r)\in H_{0}^{1}([0,1])\times H_{0}^{1}([0,1])\times H_{0}^{1}([0,1])$,
\begin{center}
$J_{*}(u_{1},u_{2},r)=|\nabla r|_{2}^{2}-\mathcal{R}\int_{0}^{1}\int_{0}^{1}F^{'}(\tau u_{1}+(1-\tau)u_{2})rd\tau\bar{r}dx$
\end{center}
and
\begin{center}
$J(u_{1},u_{2},r)=|\nabla r|_{2}^{2}-\mathcal{R}\int_{0}^{1}\int_{0}^{1}F^{'}(\tau u_{1}+(1-\tau)u_{2})rd\tau\bar{r} dx+G_{1}\left(\sum_{i=1}^{2}H^{\sigma}(u_{i})\right)|r|_{2}^{2}$,
\end{center}
where $F(u(t))=|u(t)|^{2\sigma}u(t)$ and $G_{1}$ is a constant to be determined. By Sobolev's embedding inequality, there exists $C$ such that
\begin{center}
$\mathcal{R}\int_{0}^{1}\int_{0}^{1}F^{'}(\tau u_{1}+(1-\tau)u_{2})rd\tau\bar{r}dx\leq C\left(\|u_{1}\|_{1}^{2\sigma}+\|u_{2}\|_{1}^{2\sigma}\right)|r|_{2}^{2}$.
\end{center}
Therefore, by (\ref{s4}), we can choose $G_{1}>0$ such that
\begin{center}
$J(u_{1},u_{2},r)=|\nabla r|_{2}^{2}-\mathcal{R}\int_{0}^{1}\int_{0}^{1}F^{'}(\tau u_{1}+(1-\tau)u_{2})rd\tau\bar{r} dx+G_{1}\left(\sum_{i=1}^{2}H^{\sigma}(u_{i})\right)|r|_{2}^{2}\geq \frac{1}{2}|\nabla r|_{2}^{2}$.
\end{center}
When $\lambda=-1$, we define for any $(u_{1},u_{2},r)\in H_{0}^{1}([0,1])\times H_{0}^{1}([0,1])\times H_{0}^{1}([0,1])$,
\begin{center}
$J(u_{1},u_{2},r)=|\nabla r|_{2}^{2}+\mathcal{R}\int_{0}^{1}\int_{0}^{1}F^{'}(\tau u_{1}+(1-\tau)u_{2})rd\tau\bar{r}dx$.
\end{center}
Note that
\begin{center}
$J(u_{1},u_{2},r)\geq \frac{1}{2}|\nabla r|_{2}^{2}$.
\end{center}
For any $N\geq 1$, we define for any $(u_{1},u_{2},r)\in H_{0}^{1}([0,1])\times H_{0}^{1}([0,1])\times H_{0}^{1}([0,1])$,
\begin{center}
$J_{FP}^{N}(u_{1},u_{2},r)=\exp\left(2\alpha t-\frac{\Lambda}{N^{\frac{1}{4}}}\int_{0}^{t}l\left(u_{1}(s),u_{2}(s)\right)ds\right)J(u_{1},u_{2},r)$,
\end{center}
where $l(u_{1}(s),u_{2}(s))=1+\sum_{i=1}^{2}H^{3\sigma+1}(u_{i})$ and $\Lambda$ is a constant.

In the following, we give the definition of the mild solution.
\begin{de}
The linear group $\{S(t)\}_{t\in \mathds{R}}$ is defined by $S(t)=e^{-itA}$, $t\in \mathds{R}$, associated to the equation $du=-iAudt$. We say $u$ is a $mild \ solution$ of (\ref{a2}), if
\begin{center}
$u(t)=S(t)u_{0}+i\lambda \int_{0}^{t}S(t-s)|u(s)|^{2\sigma}u(s)ds-\alpha\int_{0}^{t}S(t-s)u(s)ds+\int_{0}^{t}S(t-s)bdW(s)$  \quad  $\mathbb{P}$-a.s.
\end{center}
for all $t\geq 0$.
\end{de}
The well-posedness of equation (\ref{a2}) can be easily proved. Indeed, because the nonlinear part is not Lipschitz, we have to use a truncation argument. And by the fixed point theorem, we can prove the existence and uniqueness of the mild solution. Its proof is the same as the proof in \cite{BD2003}.

We denote by $\{P_{t}\}_{t\in \mathds{R}^{+}}$ the Markov semi-group associated to the solution of (\ref{a2}) and $\{P_{t}^{*}\}_{t\in \mathds{R}^{+}}$ the conjugate operator of $\{P_{t}\}_{t\in \mathds{R}^{+}}$.

\subsection{Basic properties of couplings}

We now recall some basic results of couplings. See, e.g. \cite{DO, M, O}.

Let $E$ be a Polish space, i.e. a complete separable metric space. Let $\mu_{1}$, $\mu_{2}$ be two distributions on a space $(E,\mathcal{E})$, where $\mathcal{E}$ is a $\sigma$-algebra of subsets of $E$. And let $Z_{1}$, $Z_{2}$ be two random variables $(\Omega,\mathcal{F})\rightarrow(E,\mathcal{E})$. We say that $(Z_{1},Z_{2})$ is a coupling of $(\mu_{1},\mu_{2})$ if $\mu_{i}=\mathcal{D}(Z_{i})$ for $i=1,2$, where we denote by $\mathcal{D}(Z_{i})$ the law of the random variable $Z_{i}$.

We denote by $Lip_{b}(E)$ the space of bounded and Lipschitz real valued functions on $E$ endowed with norm
\begin{center}
$\|\varphi\|_{L}=|\varphi|_{\infty}+L_{\varphi}$  \quad for any $\varphi\in Lip_{b}(E)$,
\end{center}
where $|\cdot|_{\infty}$ is the $L^{\infty}$ norm and $L_{\varphi}$ is the Lipschitz constant of $\varphi$. Let $\mathcal{P}(E)$ be the space of probability measures on $E$ endowed with the total variation metric
\begin{center}
$\|\mu\|_{var}=\sup\{\left|\mu(\Gamma)\right| | \ \Gamma\in \mathcal{B}(E)\}$ \quad for any $\mu\in\mathcal{P}(E)$,
\end{center}
where $\mathcal{B}(E)$ is the set of the Borelian subsets of $E$. And $\|\cdot\|_{var}$ is the dual norm of $|\cdot|_{\infty}$. We also use a Wasserstein norm
\begin{center}
$\|\mu\|_{W}=\sup\limits_{\varphi\in Lip_{b}(E),\|\varphi\|_{L}\leq 1}\left|\int_{E}\varphi(u)d\mu(u)\right|$ \quad for any $\mu\in\mathcal{P}(E)$
\end{center}
which is the dual norm of $\|\cdot\|_{L}$.

Let $\mu$, $\mu_{1}$, $\mu_{2}\in\mathcal{P}(E)$, and let $\mu_{1}$ and $\mu_{2}$ be absolutely continuous with respect to $\mu$. We set
\begin{align*}
d(\mu_{1}\wedge\mu_{2})=\left(\frac{d\mu_{1}}{d\mu}\wedge\frac{d\mu_{2}}{d\mu} \right )d\mu.
\end{align*}
This definition does not depend on the choice of $\mu$. And we have
\begin{align*}
\|\mu_{1}-\mu_{2}\|_{var}=\frac{1}{2}\int_{E}\left|\frac{d\mu_{1}}{d\mu}-\frac{d\mu_{2}}{d\mu}\right|d\mu.
\end{align*}
Note that if $\mu_{1}$ is absolutely continuous with respect to $\mu_{2}$, then we have
\begin{align}\label{a10}
\|\mu_{1}-\mu_{2}\|_{var}\leq \frac{1}{2} \sqrt{\int\left(\frac{d\mu_{1}}{d\mu_{2}}\right)^{2}d\mu_{2}-1}.
\end{align}

\begin{lem}[\cite{DO, M, O}]\label{lemma2.4}
Let $\mu_{1}$, $\mu_{2}$ be probability measures on $(E,\mathcal{E})$. Then $\|\mu_{1}-\mu_{2}\|_{var}=\min \mathbb{P}(Z_{1}\neq Z_{2})$,
the minimum is taken over all couplings $(Z_{1},Z_{2})$ of $(\mu_{1},\mu_{2})$. A coupling $(Z_{1},Z_{2})$ is said to be maximal if $\|\mu_{1}-\mu_{2}\|_{var}=\mathbb{P}(Z_{1}\neq Z_{2})$ and it has the property:
\begin{center}
$\mathbb{P}\left(Z_{1}=Z_{2},Z_{1}\in\Gamma\right)=(\mu_{1}\wedge\mu_{2})(\Gamma)$ \quad for any $\Gamma\in\mathcal{E}$.
\end{center}
\end{lem}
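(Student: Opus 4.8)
The plan is to prove the two assertions separately: first the \emph{coupling inequality}, which gives the lower bound $\|\mu_{1}-\mu_{2}\|_{var}\leq \mathbb{P}(Z_{1}\neq Z_{2})$ valid for \emph{every} coupling, and then to exhibit one distinguished coupling that attains this bound. The latter construction simultaneously shows that the infimum is achieved (so $\min$ is justified) and yields a coupling with the stated maximality property.

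For the lower bound I would fix an arbitrary coupling $(Z_{1},Z_{2})$ of $(\mu_{1},\mu_{2})$ and any $\Gamma\in\mathcal{E}$, and split each event along $\{Z_{1}=Z_{2}\}$ and $\{Z_{1}\neq Z_{2}\}$. Since the indicators of $\{Z_{1}\in\Gamma\}$ and $\{Z_{2}\in\Gamma\}$ coincide on $\{Z_{1}=Z_{2}\}$, one obtains
\begin{center}
$(\mu_{1}-\mu_{2})(\Gamma)=\mathbb{P}(Z_{1}\in\Gamma)-\mathbb{P}(Z_{2}\in\Gamma)=\mathbb{P}(Z_{1}\in\Gamma,Z_{1}\neq Z_{2})-\mathbb{P}(Z_{2}\in\Gamma,Z_{1}\neq Z_{2})$,
\end{center}
whence $|(\mu_{1}-\mu_{2})(\Gamma)|\leq \mathbb{P}(Z_{1}\neq Z_{2})$. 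Taking the supremum over $\Gamma$ and recalling $\|\mu_{1}-\mu_{2}\|_{var}=\sup_{\Gamma}|(\mu_{1}-\mu_{2})(\Gamma)|$ gives the inequality for all couplings.

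For the matching construction, choose a dominating measure $\mu$ (for instance $\mu=\mu_{1}+\mu_{2}$), write $f_{i}=d\mu_{i}/d\mu$, and set $p=(\mu_{1}\wedge\mu_{2})(E)=\int_{E}(f_{1}\wedge f_{2})\,d\mu$. Using $f_{1}\wedge f_{2}=\tfrac{1}{2}(f_{1}+f_{2}-|f_{1}-f_{2}|)$ together with the identity $\|\mu_{1}-\mu_{2}\|_{var}=\tfrac{1}{2}\int_{E}|f_{1}-f_{2}|\,d\mu$ recorded before the lemma, this yields $p=1-\|\mu_{1}-\mu_{2}\|_{var}$. I would then build the coupling from an auxiliary Bernoulli$(p)$ variable $\xi$: on $\{\xi=1\}$ draw $Z_{1}=Z_{2}$ from the normalized common part $p^{-1}(\mu_{1}\wedge\mu_{2})$, and on $\{\xi=0\}$ draw $Z_{1},Z_{2}$ independently from the normalized residuals $(1-p)^{-1}(\mu_{i}-\mu_{1}\wedge\mu_{2})$, with the degenerate cases $p\in\{0,1\}$ handled directly. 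A direct computation of $\mathbb{P}(Z_{i}\in\Gamma)$ recovers $\mu_{i}$, so this is indeed a coupling; moreover $\mathbb{P}(Z_{1}=Z_{2},Z_{1}\in\Gamma)=p\cdot p^{-1}(\mu_{1}\wedge\mu_{2})(\Gamma)=(\mu_{1}\wedge\mu_{2})(\Gamma)$, which is the maximality property, and it forces $\mathbb{P}(Z_{1}\neq Z_{2})\leq 1-p=\|\mu_{1}-\mu_{2}\|_{var}$. Combined with the lower bound this gives equality.

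The main obstacle is the verification that the two residual measures $\mu_{i}-\mu_{1}\wedge\mu_{2}$ are mutually singular: concretely, $\mu_{1}-\mu_{1}\wedge\mu_{2}$ is carried by $\{f_{1}>f_{2}\}$ and $\mu_{2}-\mu_{1}\wedge\mu_{2}$ by $\{f_{2}>f_{1}\}$, which are disjoint, so that on $\{\xi=0\}$ the samples $Z_{1},Z_{2}$ land in disjoint sets and hence $Z_{1}\neq Z_{2}$ almost surely; without this the bound $\mathbb{P}(Z_{1}\neq Z_{2})\leq 1-p$ would fail. The only remaining care concerns measurability and the genuine existence of this randomized construction, which is precisely where the Polish structure of $E$ enters, guaranteeing the needed independent product and conditional laws.
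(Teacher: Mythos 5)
The paper itself offers no proof of this lemma: it is recalled from \cite{DO, M, O} as a known fact, so there is no internal argument to compare against, and your proposal must be judged on its own merits. On those merits it is essentially the standard proof and is correct in its two main steps: the splitting of $\mu_{1}(\Gamma)-\mu_{2}(\Gamma)$ along $\{Z_{1}=Z_{2}\}$ yields $\|\mu_{1}-\mu_{2}\|_{var}\leq\mathbb{P}(Z_{1}\neq Z_{2})$ for every coupling (consistent with the paper's normalization $\|\mu_{1}-\mu_{2}\|_{var}=\sup_{\Gamma}|\mu_{1}(\Gamma)-\mu_{2}(\Gamma)|=\tfrac{1}{2}\int|f_{1}-f_{2}|\,d\mu$), and the Bernoulli-mixture construction, with common part drawn with probability $p=(\mu_{1}\wedge\mu_{2})(E)=1-\|\mu_{1}-\mu_{2}\|_{var}$ and mutually singular residuals otherwise, attains the bound and so justifies writing $\min$ rather than $\inf$. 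Your insistence on the mutual singularity of the residuals, and on measurability of the diagonal via the Polish structure, is exactly the care this construction requires.

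There is, however, one gap. As phrased in the lemma (and in the cited sources), the identity $\mathbb{P}(Z_{1}=Z_{2},Z_{1}\in\Gamma)=(\mu_{1}\wedge\mu_{2})(\Gamma)$ is asserted as a property of \emph{every} maximal coupling, i.e., of every coupling achieving $\mathbb{P}(Z_{1}\neq Z_{2})=\|\mu_{1}-\mu_{2}\|_{var}$, whereas you verify it only for the particular coupling you construct. The missing step is short and you should add it: given any maximal coupling, define the measure $\nu(\Gamma)=\mathbb{P}(Z_{1}=Z_{2},Z_{1}\in\Gamma)$. Since also $\nu(\Gamma)=\mathbb{P}(Z_{1}=Z_{2},Z_{2}\in\Gamma)$, one has $\nu\leq\mu_{1}$ and $\nu\leq\mu_{2}$, hence $\nu\leq\mu_{1}\wedge\mu_{2}$ (compare densities with respect to a dominating measure $\mu$). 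But maximality gives $\nu(E)=1-\mathbb{P}(Z_{1}\neq Z_{2})=1-\|\mu_{1}-\mu_{2}\|_{var}=(\mu_{1}\wedge\mu_{2})(E)$, and a measure dominated by $\mu_{1}\wedge\mu_{2}$ with the same total mass must equal it. With this two-line addition your proof is complete.
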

\begin{prop}[\cite{DO, O}]\label{proposition2.5}
Let $E$ and $F$ be Polish spaces, $\mu_{1}$, $\mu_{2}$ a pair of probability measures on
$E$ and $f_{0}:E\rightarrow F$ a measurable mapping. We set
$\nu_{i}=f_{0}^{*}\mu_{i}$, \ $i=1,2$.
Then there exists a coupling $(V_{1},V_{2})$ for $(\mu_{1},\mu_{2})$ such that $(f_{0}(V_{1}),f_{0}(V_{2}))$ is a maximal coupling for $(\nu_{1},\nu_{2})$.
\end{prop}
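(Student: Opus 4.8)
The plan is to build the desired coupling in two stages: first produce a maximal coupling of the pushforward measures on $F$, then lift it back to $E$ through the fibres of $f_0$ by conditioning.

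First I would apply Lemma \ref{lemma2.4} to the pair $(\nu_{1},\nu_{2})$ on $F$ to obtain a maximal coupling $(Y_{1},Y_{2})$, realised as a pair of random variables on a probability space with $\mathcal{D}(Y_{i})=\nu_{i}$ and $\|\nu_{1}-\nu_{2}\|_{var}=\mathbb{P}(Y_{1}\neq Y_{2})$. The goal is then to attach to each $Y_{i}$ a random variable $V_{i}$ taking values in $E$, lying in the fibre $f_{0}^{-1}(Y_{i})$, and distributed according to $\mu_{i}$ conditioned on that fibre.

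Second, since $E$ is Polish and $f_{0}$ is measurable, I would invoke the disintegration (regular conditional probability) theorem to obtain, for each $i$, a measurable family $\{\mu_{i}^{y}\}_{y\in F}$ of probability measures on $E$ such that $\mu_{i}^{y}(f_{0}^{-1}(y))=1$ for $\nu_{i}$-almost every $y$ and $\mu_{i}(A)=\int_{F}\mu_{i}^{y}(A)\,\nu_{i}(dy)$ for every Borel set $A\subseteq E$. Enlarging the probability space, I would then sample, conditionally on $(Y_{1},Y_{2})$, the pair $(V_{1},V_{2})$ so that $V_{1}$ and $V_{2}$ are conditionally independent with $V_{i}$ drawn from $\mu_{i}^{Y_{i}}$.

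It then remains to verify the two required properties. For the marginals, integrating the conditional law against $\mathcal{D}(Y_{i})=\nu_{i}$ gives $\mathcal{D}(V_{i})=\int_{F}\mu_{i}^{y}\,\nu_{i}(dy)=\mu_{i}$, so $(V_{1},V_{2})$ is indeed a coupling of $(\mu_{1},\mu_{2})$. For the maximality, the support property $\mu_{i}^{Y_{i}}(f_{0}^{-1}(Y_{i}))=1$ forces $f_{0}(V_{i})=Y_{i}$ almost surely, whence $(f_{0}(V_{1}),f_{0}(V_{2}))=(Y_{1},Y_{2})$, which is a maximal coupling of $(\nu_{1},\nu_{2})$ by construction. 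The main obstacle is the disintegration step: one must guarantee the existence of the measurable fibrewise conditional measures $\mu_{i}^{y}$ and that they are genuinely concentrated on the fibres $f_{0}^{-1}(y)$. This is precisely where the Polish (equivalently, standard Borel) structure of $E$ is used, together with the measurability of $f_{0}$; the remaining verifications are routine applications of Fubini's theorem and the tower property of conditional expectation.
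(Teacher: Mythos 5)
Your proof is correct: the paper itself gives no argument for Proposition \ref{proposition2.5} (it is quoted from \cite{DO, O}), and your two-stage construction---first a maximal coupling $(Y_{1},Y_{2})$ of $(\nu_{1},\nu_{2})$ via Lemma \ref{lemma2.4}, then lifting through the fibres of $f_{0}$ by disintegration of $\mu_{i}$ into regular conditional probabilities $\mu_{i}^{y}$ concentrated on $f_{0}^{-1}(y)$---is exactly the standard argument used in those references. The Polish assumption indeed enters only to guarantee the disintegration, and once $f_{0}(V_{i})=Y_{i}$ almost surely, maximality of $(f_{0}(V_{1}),f_{0}(V_{2}))$ is immediate since it depends only on the joint law, which coincides with that of $(Y_{1},Y_{2})$.
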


\subsection{Main results}

In this section, we will state the main results.

We first define $G$ by $Gu=iAu$ and set
\begin{align*}
&X=P_{N_{*}}u, \ Y=Q_{N_{*}}u, \ \beta=P_{N_{*}}W, \ \eta=Q_{N_{*}}W, \\
&\sigma_{l}=P_{N_{*}}bP_{N_{*}}, \ \sigma_{h}=Q_{N_{*}}bQ_{N_{*}}, \\
&f(X,Y)=-i\lambda P_{N_{*}}\left(|X+Y|^{2\sigma}(X+Y)\right), \ g(X,Y)=-i\lambda Q_{N_{*}}\left(|X+Y|^{2\sigma}(X+Y)\right).
\end{align*}
Then (\ref{a2}) can be written in the form
\begin{equation}\label{a3}
\begin{cases}
dX+GXdt+\alpha Xdt+f(X,Y)dt=\sigma_{l}d\beta, \\
dY+GYdt+\alpha Ydt+g(X,Y)dt=\sigma_{h}d\eta, \\
X(0)=x_{0}, Y(0)=y_{0}.
\end{cases}
\end{equation}
We note that Assumption \ref{s2} implies that $\sigma_{l}$ is invertible.

Given two initial datas $u_{0}^{i}=\left(x_{0}^{i},y_{0}^{i}\right)$, $i=1,2$, we construct a coupling
\begin{center}
$(u_{1},u_{2})=\left((X_{1},Y_{1}),(X_{2},Y_{2})\right)$
\end{center}
of the two solutions $u(\cdot,u_{0}^{i})=\left(X(\cdot,x_{0}^{i}),Y(\cdot,y_{0}^{i})\right)$, $i=1,2$, of (\ref{a3}).
We denote by $l_{0}$ an integer valued random process, which is particularly convenient when deriving properties of the coupling:
\begin{center}
$l_{0}(k)=\min\left \{l\in\{0,\ldots,k \right\}|\ (P_{l,k}) \ {\rm holds}\}$,
\end{center}
where $\min\emptyset=\infty$ and
\begin{equation*}
(P_{l,k}):
\begin{cases}
X_{1}(t)=X_{2}(t), \ \eta_{1}(t)=\eta_{2}(t), \qquad \forall t\in[lT,kT] \\
H_{l}\leq d_{0}, \\
E_{u_{i},3\sigma+1}(t,lT)\leq \kappa+1+d_{0}^{3\sigma+1}+d_{0}^{6\sigma+2}+B(t-lT), \quad i=1,2,  \qquad \forall t\in[lT,kT],
\end{cases}
\end{equation*}
where $d_{0}$, $\kappa$, $B$ are constants and we set
\begin{center}
$H_{l}=H(u_{1}(lT))+H(u_{2}(lT))$.
\end{center}
We say that $X_{1}$, $X_{2}$ are coupled at $kT$ if $l_{0}(k)\leq k$, i.e. if $l_{0}(k)\neq \infty$.
The following properties hold for the integer valued random process $l_{0}$.
\begin{itemize}
\item[{\textbf{(H1)}}]
$l_{0}(k+1)=l$ implies $l_{0}(k)=l$ for any $l\leq k$, \nonumber \\
$l_{0}(k)\in\{0,1,\ldots,k\}\cup\{\infty\}$, \\ \nonumber
$l_{0}(k)$ depends only on $u_{1}|_{[0,kT]}$ and $u_{2}|_{[0,kT]}$, \\ \nonumber
$l_{0}(k)=k$ implies $H_{k}\leq d_{0}$.
\end{itemize}
\quad We now give four conditions on the coupling which allow to prove polynomial convergence to equilibrium.
\begin{itemize}
\item[{\textbf{(H2)}}]
There exist $C_{0}$ and $q>0$ such that for any $t\in[lT,kT]\cap \mathds{R}^{+}$, we have  \\
\indent $\mathbb{P}\left(d_{E}\left(u_{1}(t),u_{2}(t)\right)>C_{0}(t-lT)^{-q} \ {\rm and} \ l_{0}(k)\leq l\right)\leq C_{0}(t-lT)^{-q}$.
\end{itemize}
(H2) implies that if $u_{1}(t)$ and $u_{2}(t)$ are coupled at time $lT$, then the probability that the distance between $u_{1}(t)$ and $u_{2}(t)$ is small when $t>lT$ will be large.
\begin{itemize}
\item[\textbf{(H3)}]
For any $q \in \mathds{N} \backslash \{0\}$, there exists $T_{q}>0$ such that for any $l\leq k$, $T\geq T_{q}$, we have  \\
\indent $\mathbb{P}(l_{0}(k+1)\neq l \ | \ l_{0}(k)=l)\leq \frac{1}{2}\left(1+(k-l)T\right)^{-q}$.
\end{itemize}
We can deduce from (H3) that if $u_{1}(t)$ and $u_{2}(t)$ are coupled on $[lT,kT]$, then the probability that $u_{1}(t)$ and $u_{2}(t)$ decouple will be small. Moreover, if the time they have been coupled is longer, then the probability will be smaller.
\begin{itemize}
\item[\textbf{(H4)}]
For any $R_{0}$, $d_{0}>0$, there exist $T^{*}(R_{0},d_{0})>0$ and $p_{-1}(d_{0})>0$ such that for any $T\geq T^{*}(R_{0},d_{0})$, we have \\
\indent $\mathbb{P}(l_{0}(k+1)=k+1 \ | \ l_{0}(k)=\infty, H_{k}\leq R_{0})\geq p_{-1}(d_{0})$.
\end{itemize}
(H4) says that in a small ball, the probability that $u_{1}(t)$ and $u_{2}(t)$ will be coupled is positive.
\begin{itemize}
\item[\textbf{(H5)}]
There exists $C_{k}^{'}$ such that for any initial data $u_{0}$ and any stopping time $\tau\in\{kT, k\in \mathds{N}\}\cup \{\infty\}$, we have the estimates\\
\indent \quad $\mathbb{E}H(u(t))\leq \exp(-\alpha t)H(u_{0})+\frac{C_{k}^{'}}{2}$,  \\
\indent \quad $\mathbb{E}(H(u(\tau))|_{\tau < \infty})\leq C_{k}^{'}(H(u_{0})+\mathbb{E}(\tau|\tau<\infty))$.
\end{itemize}
In our case, $H(u)$ is the Lyapunov function. (H5) describes the Lyapunov structure.

We say the process $V=\left(u_{1},u_{2}\right)$ is $l_{0}$-Markov if the laws of $V(kT+\cdot)$ and of $l_{0}(k+\cdot)-k$ on $\{l_{0}(k)\in \{k,\infty\}\}$ conditioned by $\mathcal{F}_{kT}$ only depend on $V(kT)$ and equal to the laws of $V(\cdot,V(kT))$ and $l_{0}$, respectively.

In the proof, we construct a coupling $((u_{1},W_{1}),(u_{2},W_{2}))$ of two solutions which is $l_{0}$-Markov. We can modify the construction such that it is Markov at discrete times $T\mathds{N}=\{kT,k\in \mathds{N}\}$. But it seems impossible to modify the coupling to be Markov at continuous time.
\begin{thm}\label{a8}
There exists $N_{0}$ such that if Assumptions \ref{s2}-\ref{s3} hold with $N_{*}\geq N_{0}$, then for any $\left(u_{0}^{1},W_{0}^{1}\right), \left(u_{0}^{2},W_{0}^{2}\right)$, there exists a coupling $V=\left((u_{1},W_{1}),(u_{2},W_{2})\right)$ of the laws of $\left(u\left(\cdot, u_{0}^{1}\right),W\left(\cdot, W_{0}^{1}\right)\right)$ and $\left(u\left(\cdot, u_{0}^{2}\right),W\left(\cdot, W_{0}^{2}\right)\right)$, where $V$ is $l_{0}$- Markov and satisfies (H1)-(H5) with $R_{0}>4C_{1}^{'}$ and $R_{0}\geq d_{0}$. Furthermore, there exists $C>0$, such that for any $\varphi\in Lip_{b}(H_{0}^{1}([0,1]))$ and $u_{0}^{1},u_{0}^{2}\in H_{0}^{1}([0,1])$,
\[
\left|\mathbb{E}\varphi\left(u(t,u_{0}^{1})\right)-\mathbb{E}\varphi\left(u\left(t,u_{0}^{2}\right)\right)\right|\leq C\left(1+t\right)^{-q}\|\varphi\|_{L}\left(1+H(u_{0}^{1})+H(u_{0}^{2})\right). \]
\end{thm}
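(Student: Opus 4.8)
The plan is to separate the statement into two essentially independent tasks: the \emph{construction} of an $l_0$-Markov coupling $V=((u_1,W_1),(u_2,W_2))$ satisfying the five structural properties (H1)--(H5), and the purely probabilistic \emph{deduction} of the polynomial mixing bound from (H1)--(H5). The second task is an abstract renewal argument on the Markov chain $V(kT+\cdot)$ indexed by $T\N$: (H4) gives a positive probability $p_{-1}(d_0)$ of entering a coupled state inside a sublevel set of $H$, (H3) gives a polynomial tail for the decoupling probability once coupled, (H2) converts ``coupled'' into ``close in $H_0^1$'' with algebraic rate, and (H5) propagates the polynomial moment control of the Lyapunov function $H$ along the chain; choosing the threshold $R_0>4C_1'$ (with $C_1'$ from (H5)) and $R_0\ge d_0$ makes the renewal estimate close. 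Testing $\varphi\in Lip_b(H_0^1([0,1]))$ against the coupling, $|\mathbb{E}\varphi(u(t,u_0^1))-\mathbb{E}\varphi(u(t,u_0^2))|$ is bounded by $2|\varphi|_\infty\,\mathbb{P}(u_1(t),u_2(t)\text{ far in }d_E)+L_\varphi\,\mathbb{E}[d_E(u_1(t),u_2(t))]$, which the above estimates dominate by $C(1+t)^{-q}\|\varphi\|_L(1+H(u_0^1)+H(u_0^2))$. I therefore concentrate the plan on the construction and on verifying (H1)--(H5), which is the real content.

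For the construction I would work block by block on the intervals $[kT,(k+1)T]$ and glue maximal couplings so that the law is Markov at the discrete times $kT$. On one block, given the first driving pair $(u_1,W_1)$, I introduce an auxiliary process $\tilde u_2=(\tilde X_2,\tilde Y_2)$ that shares the high-frequency noise $\eta_1$ and whose low modes are \emph{slaved} to $X_1$. Writing (\ref{a3}) for both and imposing $\tilde X_2=X_1$ forces the corrected low-frequency Brownian motion
\[
d\tilde\beta_2=d\beta_1+\sigma_l^{-1}\bigl(f(X_1,Y_1)-f(X_1,\tilde Y_2)\bigr)\,dt,
\]
which is well defined because Assumption \ref{s2} with $N_*\ge N_0$ makes $\sigma_l=P_{N_*}bP_{N_*}$ invertible. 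Setting $W_2=(\tilde\beta_2,\eta_1)$, Girsanov's theorem produces a measure with an explicit density $\rho$ under which $(\tilde u_2,W_2)$ has the law of $(u(\cdot,u_0^2),W(\cdot,W_0^2))$; the integrability/Novikov condition holds because the drift correction $\sigma_l^{-1}(f(X_1,Y_1)-f(X_1,\tilde Y_2))$ is controlled by the energy via the Sobolev bound recorded just before the definition of $J$. Proposition \ref{proposition2.5}, with $f_0$ the map to $(P_{N_*}u,\,Q_{N_*}W)$, then yields a coupling $(u_1,u_2)$ whose low-mode and high-noise marginals form a maximal coupling; on the success event one has $X_1(t)=X_2(t)$ and $\eta_1(t)=\eta_2(t)$ for all $t$ in the block. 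Defining $l_0(k)$ from the three conditions in $(P_{l,k})$ and concatenating the blocks gives the $l_0$-Markov property and (H1) directly by construction.

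The heart of the work is (H2)--(H5). For (H2) I would prove a \emph{Foias--Prodi estimate on average}: on $\{X_1=X_2,\ \eta_1=\eta_2\}$ the difference $r=u_1-u_2=Q_{N_*}r$ is noise-free and solves $dr=-iAr\,dt+i\lambda(F(u_1)-F(u_2))\,dt-\alpha r\,dt$. Differentiating $J_{FP}^N(u_1,u_2,r)$ and using $J\ge\tfrac12|\nabla r|_2^2$ with $l=1+\sum_i H^{3\sigma+1}(u_i)$, the choice of $\Lambda$ and of $N=N_*\ge N_0$ makes $J_{FP}^N$ nonincreasing pathwise, giving
\[
\|r(t)\|_1^2\ \lesssim\ \exp\!\Bigl(-2\alpha (t-lT)+\tfrac{\Lambda}{N^{1/4}}\int_{lT}^{t} l(u_1,u_2)\,ds\Bigr)\|r(lT)\|_1^2 .
\]
Because the damping is only weakly dissipative, this is all that is available pathwise; taking expectations and controlling $\int_{lT}^{t} l\,ds$ through the energy constraints $E_{u_i,3\sigma+1}(t,lT)\le\kappa+1+d_0^{3\sigma+1}+d_0^{6\sigma+2}+B(t-lT)$ built into $(P_{l,k})$, and absorbing the $\Lambda/N^{1/4}$ factor by $N_*$ large, yields the algebraic decay in (H2) via Chebyshev. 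For (H3) and (H4) I would combine the total-variation bound (\ref{a10}) for the Girsanov density $\rho$ (so $\|\mathcal{D}(\tilde u_2)-\mathcal{D}(u_2)\|_{var}$ is small when the energies are bounded) with the above estimate: once coupled, the chance of decoupling on the next block decays polynomially in the elapsed coupled time, and inside $\{H_k\le R_0\}$ the coupling succeeds with probability at least $p_{-1}(d_0)>0$ for $T$ large. Finally (H5) is the Lyapunov structure: applying It\^o's formula to $H(u)$ and $H^k(u)$ and using (\ref{s1}), (\ref{s4}) and the space-smooth additive noise ($B_s<\infty$) gives $\mathbb{E}H(u(t))\le e^{-\alpha t}H(u_0)+\tfrac12 C_k'$ and the stopping-time bound, which also justify the energy constraints in $(P_{l,k})$.

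The step I expect to be the main obstacle is the weakly-dissipative Foias--Prodi estimate together with the bookkeeping of the nonlinear exponents. Unlike the parabolic case there is no spectral gap growing with frequency, so $Q_{N_*}$ does not contract the high modes pathwise and only the averaged bound above is available; its usefulness hinges on controlling $\mathbb{E}\int H^{3\sigma+1}(u_i)\,ds$ by a polynomial, which is exactly why $H$ carries the modification $G|u|_2^{2+\frac{4\sigma}{2-\sigma}}$ in the focusing case and why one must take $\sigma\in[0,2)$ there, so that the exponent $2+\frac{4\sigma}{2-\sigma}$ and the Gagliardo--Nirenberg exponents in (\ref{s1}) close up, whereas the defocusing sign permits all $\sigma\in[0,\infty)$. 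Managing these exponents inside $J_{FP}^N$ and inside the higher moments $E_{u,k}$, and keeping the Girsanov density integrable under only polynomial (not exponential) energy growth, is the delicate part; once these estimates are secured, the passage from (H1)--(H5) to the stated polynomial mixing bound is the standard coupling renewal argument.
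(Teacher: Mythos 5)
Your proposal follows essentially the same route as the paper's proof: blockwise construction on $[kT,(k+1)T]$ with Girsanov slaving of the low modes via $\sigma_{l}^{-1}$ and maximal couplings through Proposition \ref{proposition2.5}, verification of (H1)--(H5) via the averaged Foias--Prodi estimate (Lemma \ref{proposition3.6} and Corollary \ref{corollary3.7}) for (H2), the Lyapunov bounds of Lemma \ref{lemma3.2} for (H5), the Girsanov density and total-variation bound (\ref{a10}) for (H3)--(H4) as in \cite{DO}, and the standard renewal argument together with the Lipschitz test-function estimate to conclude. One small caveat: your claim that the choice of $\Lambda$ and $N$ makes $J_{FP}^{N}$ \emph{nonincreasing pathwise} is not right -- even on the event $\{X_{1}=X_{2},\,\eta_{1}=\eta_{2}\}$, where $r$ solves a noise-free equation, the noise still enters $J$ through $u_{1},u_{2}$, so only the expectation bound $\mathbb{E}\left[J_{FP}^{N}(u_{1},u_{2},r)(t\wedge\tau)\right]\leq J\left(u_{0}^{1},u_{0}^{2},r_{0}\right)$ of Lemma \ref{proposition3.6} is available, which is precisely the averaged version you in fact use afterwards.
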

Based on Theorem \ref{a8}, we can easily obtain the following corollary.
\begin{cor}\label{corollary3.11}
Under the assumptions of Theorem \ref{a8}, there exist a constant $K_{1}>0$ and a unique invariant measure $\upsilon$ of $(P_{t})_{t>0}$ on $H_{0}^{1}([0,1])$. It satisfies
\[
\int_{H_{0}^{1}([0,1])}H(u)d\upsilon(u)\leq\frac{K_{1}}{2}.
\]
Furthermore, for any $\mu\in \mathcal{P}(H_{0}^{1}([0,1]))$, there exists $C>0$ such that
\[
\|P_{t}^{*}\mu-\nu\|_{W}\leq C(1+t)^{-q}\left(1+\int_{H_{0}^{1}([0,1])}H(u)d\mu(u)\right).
\]
\end{cor}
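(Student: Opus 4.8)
The goal is to derive Corollary~\ref{corollary3.11} from Theorem~\ref{a8}, so my plan is to treat the theorem's quantitative mixing estimate as a black box and extract three things from it: uniqueness of the invariant measure, existence together with the moment bound $\int H\,d\upsilon\le K_1/2$, and the polynomial convergence of $P_t^*\mu$ to $\upsilon$ in the Wasserstein norm $\|\cdot\|_W$.

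\textbf{Uniqueness.} First I would observe that the estimate in Theorem~\ref{a8} controls $|\mathbb{E}\varphi(u(t,u_0^1))-\mathbb{E}\varphi(u(t,u_0^2))|$ for every $\varphi\in Lip_b(H_0^1)$ with a bound that tends to $0$ as $t\to\infty$ (the exponent $q>0$). Rewriting $\mathbb{E}\varphi(u(t,u_0^i))=\int\varphi\,d(P_t^*\delta_{u_0^i})$ and using that $\|\cdot\|_W$ is the dual norm of $\|\cdot\|_L$, this says exactly
\[
\|P_t^*\delta_{u_0^1}-P_t^*\delta_{u_0^2}\|_W\le C(1+t)^{-q}\bigl(1+H(u_0^1)+H(u_0^2)\bigr).
\]
If $\upsilon_1,\upsilon_2$ are two invariant measures with finite $H$-moment, then integrating this against $d\upsilon_1(u_0^1)\,d\upsilon_2(u_0^2)$ and using invariance $P_t^*\upsilon_i=\upsilon_i$ together with the triangle inequality for $\|\cdot\|_W$ gives $\|\upsilon_1-\upsilon_2\|_W\le C(1+t)^{-q}(1+\int H\,d\upsilon_1+\int H\,d\upsilon_2)\to 0$, whence $\upsilon_1=\upsilon_2$ since $\|\cdot\|_W$ separates measures. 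The only subtlety is ensuring the invariant measures carry a finite $H$-moment, which follows from the Lyapunov estimate (H5) via a standard argument (integrate the first bound in (H5) against the invariant measure and use invariance to get $\int H\,d\upsilon\le C_k'/2=:K_1/2$), and this simultaneously yields the claimed moment bound.

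\textbf{Existence and the convergence estimate.} For existence I would appeal to the Krylov--Bogolyubov theorem as the authors indicate in the introduction: the Lyapunov bound (H5) gives tightness of the time-averaged measures on $H_0^1$ (using the compact embedding available on the bounded domain), producing at least one invariant measure $\upsilon$, which uniqueness then pins down. For the final convergence statement, given arbitrary $\mu\in\mathcal{P}(H_0^1)$ I would disintegrate: writing $P_t^*\mu=\int P_t^*\delta_{u_0}\,d\mu(u_0)$ and $\upsilon=\int P_t^*\delta_{v_0}\,d\upsilon(v_0)$ (invariance), the triangle inequality in $\|\cdot\|_W$ reduces the bound to integrating the two-point estimate above, giving
\[
\|P_t^*\mu-\upsilon\|_W\le\int\!\!\int C(1+t)^{-q}\bigl(1+H(u_0)+H(v_0)\bigr)\,d\mu(u_0)\,d\upsilon(v_0),
\]
and the $\upsilon$-integral of $H$ is finite by the moment bound, so this collapses to $C(1+t)^{-q}(1+\int H\,d\mu)$ after absorbing the constant $1+\int H\,d\upsilon$ into $C$.

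\textbf{Main obstacle.} The routine parts are the dualities and triangle inequalities; the genuine care is needed in justifying that the invariant measure has finite energy moment and that the $H$-integrals appearing after disintegration are finite and can be pulled through the integral signs (Fubini/Tonelli on the nonnegative integrand $H$, plus the measurability of $u_0\mapsto\mathbb{E}\varphi(u(t,u_0))$). I expect the subtlest point to be confirming that tightness from (H5) actually holds in the $H_0^1$-topology rather than a weaker one, since Theorem~\ref{a8} and (H5) are phrased with the Lyapunov function $H$ controlling the $H^1$-energy; once one knows sublevel sets of $H$ are relatively compact in $H_0^1$ (via the compactness of the embedding on $[0,1]$), the Krylov--Bogolyubov argument and the moment bound both go through without difficulty.
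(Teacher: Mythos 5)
Your uniqueness argument and your derivation of the convergence rate are sound and are essentially the standard dual-norm/disintegration arguments that the paper leaves implicit (it offers no explicit proof, saying only that the corollary follows from Theorem \ref{a8}); the moment bound also works, though your one-line derivation of $\int H\,d\upsilon\leq C_{1}'/2$ needs the usual truncation ($H\wedge M$, then $t\to\infty$ by dominated convergence, then $M\to\infty$ by monotone convergence), since $\int H\,d\upsilon<\infty$ is not known a priori and cannot be subtracted across the inequality.

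There is, however, a genuine gap in your existence argument. You claim that sublevel sets of $H$ are relatively compact in $H_{0}^{1}([0,1])$ ``via the compactness of the embedding on $[0,1]$.'' This is false: $H(u)$ controls precisely $\|u\|_{1}^{2}$ (plus lower-order terms), so $\{H\leq R\}$ is a closed \emph{bounded} subset of $H_{0}^{1}$, and closed bounded sets in an infinite-dimensional Hilbert space are never norm-compact. The compact embedding on the bounded domain gives relative compactness of such sets in $L^{2}$ (or in $H^{s}$ for $s<1$), not in $H_{0}^{1}$ itself, which is the space where the invariant measure must live. Consequently the Krylov--Bogolyubov argument as you state it does not close; this is exactly why the paper's introduction says that route would require ``some extra estimates about the solution,'' meaning bounds in a space strictly more regular than $H^{1}$. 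The natural repair, which stays entirely inside what Theorem \ref{a8} and (H5) provide, is to get existence without any compactness: by the Markov property, (H5), and the two-point estimate,
\begin{align*}
\|P_{t}^{*}\delta_{u_{0}}-P_{s}^{*}\delta_{u_{0}}\|_{W}
&\leq \int \|P_{s}^{*}\delta_{v}-P_{s}^{*}\delta_{u_{0}}\|_{W}\,d\bigl(P_{t-s}^{*}\delta_{u_{0}}\bigr)(v)\\
&\leq C(1+s)^{-q}\Bigl(1+2H(u_{0})+\tfrac{C_{1}'}{2}\Bigr)\longrightarrow 0
\quad\text{as } s\to\infty \ (s\leq t),
\end{align*}
so $(P_{t}^{*}\delta_{u_{0}})_{t\geq0}$ is Cauchy in $\|\cdot\|_{W}$; since this norm is complete on $\mathcal{P}(H_{0}^{1}([0,1]))$ and metrizes weak convergence, the limit $\upsilon$ exists, and it is invariant by the Feller property of $(P_{t})_{t>0}$ (which follows from well-posedness). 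With that substitution, the rest of your proof goes through as written.
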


\section{Priori estimates and proof of Theorem \ref{a8}}
\setcounter{equation}{0}
\subsection{Priori estimates}
In this section, we will give some prior estimates needed to prove the main theorem.
\begin{prop}\label{x1}
There exists a measurable map
\begin{center}
$\Phi:C\left([0,T];P_{N_{*}}H^{1}_{0}\right)\times C\left([0,T];Q_{N_{*}}H^{-1}\right)\times H^{1}_{0}\rightarrow C\left([0,T];Q_{N_{*}}H^{1}_{0}\right)$
\end{center}
such that for any $(u,W)$ which is the weak solution of (\ref{a3}),
\begin{center}
$Y=\Phi\left(X,\eta,u_{0}\right)$ \ on  \ $[0,T]$.
\end{center}
Moreover, $\Phi$ is a non-anticipative function of $\left(X,\eta\right)$.
\end{prop}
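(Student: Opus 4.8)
The plan is to observe that the second equation of (\ref{a3}) is, by itself, an evolution equation for the high-frequency part $Y$ once the low-frequency path $X$ is regarded as a given, time-dependent coefficient. Accordingly I would \emph{define} $\Phi(X,\eta,u_0)$ to be the solution $Y$ of that single equation with $y_0=Q_{N_*}u_0$, and then verify that the resulting map is well defined into $C([0,T];Q_{N_*}H^1_0)$, measurable, and non-anticipative. Note that $P_{N_*}H^1_0$ is finite-dimensional, so the input $X$ carries no spatial-regularity difficulty and only $\eta$ needs care.

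First I would write the $Y$-equation in mild form,
\[
Y(t)=S(t)y_0-\int_0^t S(t-s)\,g(X(s),Y(s))\,ds-\alpha\int_0^t S(t-s)Y(s)\,ds+\int_0^t S(t-s)\sigma_h\,d\eta(s).
\]
The last term is the only genuinely stochastic object, and I would treat it pathwise: for a fixed path $\eta$, integration by parts against the unitary group $S(t)=e^{-itA}$ expresses $z_\eta(t):=\int_0^t S(t-s)\sigma_h\,d\eta(s)$ as a continuous, hence Borel, function of $\eta\in C([0,T];Q_{N_*}H^{-1})$. Setting $v:=Y-z_\eta$ removes the noise and leaves a random PDE for $v$ forced through $X$ and $z_\eta$. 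Since the spatial dimension is one, $H^1_0\hookrightarrow L^\infty$, so the nonlinearity $g(X,\cdot)$ is Lipschitz on bounded subsets of $Q_{N_*}H^1_0$; a truncation together with the Banach fixed point theorem then yields a unique local solution in $C([0,T];Q_{N_*}H^1_0)$, exactly as in the well-posedness argument for (\ref{a2}) borrowed from \cite{BD2003}. Global existence on $[0,T]$ follows from the same a priori energy bounds that make the full system globally well posed. For the identity in the statement, I would note that the components of any weak solution $(u,W)$ of (\ref{a3}) satisfy precisely this $Y$-equation with the given $(X,\eta,u_0)$; hence uniqueness for the single equation forces $Y=\Phi(X,\eta,u_0)$.

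For measurability I would argue that the fixed-point solution depends continuously on $(X,\eta,u_0)$: the contraction constant is uniform on bounded sets and the standard stability estimate gives continuous dependence in the norms of $C([0,T];P_{N_*}H^1_0)$, $C([0,T];Q_{N_*}H^{-1})$ and $H^1_0$, whence $\Phi$ is continuous and therefore Borel measurable; alternatively one realizes $\Phi$ as the pointwise limit of the measurable Picard, or truncation, iterates. Non-anticipativeness is immediate from the mild formula, since the value $Y(t)$ depends on $X$ and $\eta$ only through their restrictions to $[0,t]$; thus if two inputs agree on $[0,t]$ the corresponding solutions agree on $[0,t]$ by uniqueness there, which is exactly the asserted non-anticipative dependence on $(X,\eta)$.

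The main obstacle I anticipate is the regularity mismatch: the input $\eta$ lives only in $C([0,T];Q_{N_*}H^{-1})$, whereas the output $Y$ must lie in $C([0,T];Q_{N_*}H^1_0)$, a gain of two spatial derivatives. This gap is bridged entirely by the smoothness of the noise, namely $b\in\mathcal{L}_{2}(L^2([0,1]),D(A^{3/2}))$ together with $\sigma_h=Q_{N_*}bQ_{N_*}$, which regularizes the stochastic convolution $z_\eta$; making precise that $\eta\mapsto z_\eta$ is a continuous map $C([0,T];Q_{N_*}H^{-1})\to C([0,T];Q_{N_*}H^1_0)$, and uniformly enough to feed the fixed point, is the technical heart of the argument. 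The remaining well-posedness and measurability steps reuse the structure already present in the global theory for (\ref{a2}) and are routine by comparison.
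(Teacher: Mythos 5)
Your overall route is the same as the paper's own (very terse) proof---rewrite the second equation of (\ref{a3}) in mild form and apply a fixed point argument for given $(X,\eta,u_0)$---but the detail you add at precisely the point you call ``the technical heart'' contains a genuine gap. You claim that pathwise integration by parts makes $\eta\mapsto z_\eta=\int_0^\cdot S(\cdot-s)\sigma_h\,d\eta(s)$ a \emph{continuous} map from $C([0,T];Q_{N_*}H^{-1})$ into $C([0,T];Q_{N_*}H^1_0)$. Under Assumption \ref{s2} this is false. Integration by parts gives
\[
z_\eta(t)=\sigma_h\eta(t)-i\int_0^t AS(t-s)\sigma_h\eta(s)\,ds ,
\]
and since $b$ is diagonal with $\sup_n\mu_n^3b_n^2\leq B_3<\infty$, the operator $\sigma_h$ maps $H^{-1}$ boundedly into $H^2$ but in general \emph{not} into $H^3$; as the Schr\"odinger group is unitary and has no smoothing, the integral term is controlled only in $L^2$, so this formula yields $z_\eta\in C([0,T];L^2)$---one derivative short of the target space. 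The loss is real, not an artifact of the estimate: taking $\eta(s)=\varepsilon\sin(\mu_n s)\,e_n$ with $n>N_*$, the by-parts term has $H^1$-size of order $\varepsilon|b_n|\mu_n^{3/2}t$, while $\|\eta\|_{C([0,T];H^{-1})}=\varepsilon\mu_n^{-1/2}$, so the map has operator norm at least of order $\mu_n^{2}|b_n|$ on the $n$-th mode; Assumption \ref{s2} permits $\mu_n^{2}|b_n|\to\infty$ (e.g.\ $b_n^2=\mu_n^{-3}n^{-1.1}$). Hence no continuous pathwise definition of the stochastic convolution with values in $C([0,T];H^1_0)$ exists: its $H^1$ (indeed $H^3$) regularity is an almost-sure property of Wiener paths, visible only through the It\^o integral. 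This also blocks your contraction step as stated, since with forcing merely in $C([0,T];L^2)$ the iteration cannot close in $C([0,T];Q_{N_*}H^1_0)$.

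The repair is standard and is exactly what the statement's wording permits, since only measurability of $\Phi$ and an almost-sure identity for weak solutions are demanded: define $z(\eta)$ as the limit in $C([0,T];Q_{N_*}H^1_0)$ of the pathwise convolutions of piecewise-linear (or finite-dimensional) approximations of $\eta$, on the Borel set of paths where this limit exists, and $z(\eta)=0$ elsewhere. This map is measurable and non-anticipative, and it coincides a.s.\ with the It\^o convolution whenever $\eta=Q_{N_*}W$ for a weak solution $(u,W)$. Feeding $z(\eta)$ into your fixed point argument---which is otherwise fine, as $g(X,\cdot)$ is locally Lipschitz on $H^1_0$ in one dimension and the a priori bounds give global existence---produces a measurable, non-anticipative $\Phi$ with $Y=\Phi(X,\eta,u_0)$ a.s. Your parenthetical fallback (realizing $\Phi$ as a pointwise limit of measurable truncation iterates) is the right idea; it is the continuity claim that must be dropped. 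To be fair, the paper's two-line proof glosses over this issue entirely, so your proposal at least locates the difficulty---but the mechanism you propose to resolve it cannot work as written.
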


We can rewrite the second equation of (\ref{a3}) in the following form
\[
Y(t)=S(t)y_{0}-\alpha\int_{0}^{t}S(t-s)Y(s)ds-\int_{0}^{t}S(t-s)g(X(s),Y(s))ds+\int_{0}^{t}S(t-s)\sigma_{h}d\eta.
\]
Given $u_{0}\in H_{0}^{1}$, $X\in C([0,T];P_{N_{*}}H_{0}^{1})$ and $\eta\in C([0,T];Q_{N_{*}}H^{-1})$, Proposition \ref{x1} can be proved by applying the fixed point theorem.

\begin{lem}\label{lemma3.1}
For each $k\in \mathds N \setminus\{0\}$ and $t\in \mathds R ^{+}$,
there exist $C$, $C_{1}$, $C_{k}>0$ such that
\begin{flalign*}
d\left|u\right|_{2}^{2+ \frac{4\sigma}{2-\sigma}}+\alpha\left(\frac{3}{2}+\frac{4\sigma}{2-\sigma}\right)|u|_{2}^{2+ \frac{4\sigma}{2-\sigma}}dt\leq \left(2+\frac{4\sigma}{2-\sigma}\right)|u|_{2}^{\frac{4\sigma}{2-\sigma}}(u,bdW)+Cdt.
\end{flalign*}
When $\lambda=1$, we have the estimates
\begin{flalign*}
&dH(u)+\alpha H(u)dt\leq \left(Au -|u|^{2\sigma}u,bdW\right)+G\left(2+\frac{4\sigma}{2-\sigma}\right)|u|_{2}^{\frac{4\sigma}{2-\sigma}}(u,bdW)+C_{1}dt, \\
&dH^{k}(u)+\frac{1}{2}\alpha kH^{k}(u)dt\leq kH^{k-1}(u)\left[\left(Au -|u|^{2\sigma}u,bdW\right)+G\left(2+\frac{4\sigma}{2-\sigma}\right)|u|_{2}^{\frac{4\sigma}{2-\sigma}}(u,bdW)\right]+C_{k}dt.
\end{flalign*}
When $\lambda=-1$, we similarly have the estimates
\begin{flalign*}
&dH(u)+\alpha H(u)dt\leq \left(Au +|u|^{2\sigma}u,bdW\right)+C_{1}dt, \\
&dH^{k}(u)+\frac{1}{2}\alpha kH^{k}(u)dt\leq kH^{k-1}(u)\left(Au+|u|^{2\sigma}u,bdW\right)+C_{k}dt.
\end{flalign*}
\end{lem}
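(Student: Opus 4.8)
The plan is to obtain every inequality by repeated use of It\^o's formula, exploiting the Hamiltonian structure of the conservative part of \eqref{a2}. The natural starting point is the $L^2$-norm. Note first that the conservative drift $-iAu+i\lambda|u|^{2\sigma}u=-iH_*'(u)$, where $H_*'(u)=Au-\lambda|u|^{2\sigma}u$, is orthogonal to $u$ in the real inner product: indeed $(u,-iH_*'(u))=\mathcal R\big(i\int_0^1 u\,\overline{H_*'(u)}\,dx\big)=0$ since $\int_0^1 u\,\overline{H_*'(u)}\,dx$ is real. Applying It\^o's formula to $|u|_2^2=(u,u)$ and adding the additive-noise correction $B_0=\sum_n b_n^2$ therefore gives
\[
d|u|_2^2+2\alpha|u|_2^2\,dt=2(u,b\,dW)+B_0\,dt.
\]

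For the first estimate I apply It\^o's formula to the function $x\mapsto x^{1+\frac{2\sigma}{2-\sigma}}$ at $x=|u|_2^2$, so that $x^{1+\frac{2\sigma}{2-\sigma}}=|u|_2^{2+\frac{4\sigma}{2-\sigma}}$. The first-order term reproduces the dissipation rate $\alpha(2+\frac{4\sigma}{2-\sigma})$ and the stated martingale term $(2+\frac{4\sigma}{2-\sigma})|u|_2^{\frac{4\sigma}{2-\sigma}}(u,b\,dW)$, while the contribution of $B_0$ and the second-order quadratic-variation term $\sum_n(u,be_n)^2\le B_0|u|_2^2$ are of strictly lower order in $|u|_2$. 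Young's inequality absorbs these into a fraction $\tfrac12\alpha$ of the dissipation, which is precisely the gap between $\alpha(2+\frac{4\sigma}{2-\sigma})$ and the coefficient $\alpha(\frac32+\frac{4\sigma}{2-\sigma})$ on the left, leaving only the constant $C\,dt$.

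The core of the proof is the energy estimate. Writing the equation as $du=-iH_*'(u)\,dt-\alpha u\,dt+b\,dW$ and applying It\^o's formula to $H_*$, the Hamiltonian term $(H_*'(u),-iH_*'(u))$ vanishes by the same orthogonality as above (conservation of energy along the deterministic flow), the damping produces $-2\alpha H_*(u)+\lambda\alpha\frac{\sigma}{\sigma+1}|u|_{2\sigma+2}^{2\sigma+2}$, the noise gives the martingale term $(H_*'(u),b\,dW)=(Au-\lambda|u|^{2\sigma}u,b\,dW)$, and the second-order trace term $\tfrac12\,\mathrm{Tr}\big(H_*''(u)bb^*\big)$ splits into the constant $B_1=\sum_n\mu_n b_n^2$ plus a nonlinear piece controlled, via $b\in\mathcal L_2(L^2([0,1]),D(A^{\frac32}))$ and Sobolev embedding, by $\varepsilon|\nabla u|_2^2+C$. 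In the defocusing case $\lambda=-1$ the term $\lambda\alpha\frac{\sigma}{\sigma+1}|u|_{2\sigma+2}^{2\sigma+2}$ is negative and may be dropped, and since $H_*\ge\tfrac12|\nabla u|_2^2$ the dissipation $-2\alpha H_*$ absorbs the $\varepsilon|\nabla u|_2^2$ remainder, yielding $dH+\alpha H\,dt\le(Au+|u|^{2\sigma}u,b\,dW)+C_1\,dt$. In the focusing case $\lambda=1$ that term is positive, so I add $G$ times the first estimate to form $dH=dH_*+G\,d|u|_2^{2+\frac{4\sigma}{2-\sigma}}$ and invoke the coercivity bound \eqref{s4}, which dominates $|\nabla u|_2^2$, $|u|_{2\sigma+2}^{2\sigma+2}$ and $|u|_2^{2+\frac{4\sigma}{2-\sigma}}$ separately by $H(u)$; together with Young's inequality and the restriction $\sigma<2$ (which makes $\frac{4\sigma}{2-\sigma}$ finite and validates the Gagliardo--Nirenberg/Young balancing \eqref{s1} that defines $G$), this absorbs the positive $|u|_{2\sigma+2}^{2\sigma+2}$ term and the nonlinear trace remainder into the combined dissipation, again leaving a constant $C_1$. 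I expect this last absorption to be the main obstacle: in the focusing case the damping dissipation of $H_*$ alone does not control the nonlinearity, and the entire purpose of the modified energy is that the extra dissipation supplied by the $G|u|_2^{2+\frac{4\sigma}{2-\sigma}}$ term is what closes the inequality.

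Finally, for the powers $H^k$ I apply It\^o's formula to $y\mapsto y^k$ with $y=H(u)$, giving $dH^k=kH^{k-1}\,dH+\tfrac12 k(k-1)H^{k-2}\,d\langle H\rangle$. The first term multiplies the drift bound just obtained by $kH^{k-1}$, producing dissipation $-\alpha kH^k$ and the stated martingale factor $kH^{k-1}[\,\cdots]$. The quadratic-variation term is nonnegative; bounding $d\langle H\rangle$ by the square of the martingale part of $dH$ and applying Young's inequality controls it by $\tfrac12\alpha kH^k\,dt+C_k\,dt$. Subtracting this from the available dissipation explains precisely the reduced rate $\tfrac12\alpha k$ on the left and the constant $C_k$ on the right, which completes the proof.
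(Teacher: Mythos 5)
Your overall route coincides with the paper's: It\^o's formula applied successively to $|u|_{2}^{2+\frac{4\sigma}{2-\sigma}}$, to the (modified) energy, and to $H^{k}$; the bad focusing term $+\alpha|u|_{2\sigma+2}^{2\sigma+2}$ absorbed through the inequality \eqref{s1} that defines $G$; and the quadratic variation $\tfrac12 k(k-1)H^{k-2}d\langle H\rangle$ absorbed by Young's inequality, which is exactly what produces the halved rate $\tfrac12\alpha k$ in the paper as well.

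There is, however, a genuine gap in your treatment of the second-order It\^o (trace) correction in the energy estimate. You claim that the nonlinear part of $\tfrac12\operatorname{Tr}\bigl(H_{*}''(u)bb^{*}\bigr)$ is controlled by $\varepsilon|\nabla u|_{2}^{2}+C$ and absorb it into the gradient part of the dissipation. That nonlinear part is of size $CB_{0}|u|_{2\sigma}^{2\sigma}$, and the inequality $CB_{0}|u|_{2\sigma}^{2\sigma}\leq \varepsilon|\nabla u|_{2}^{2}+C$ is simply false once $\sigma\geq 1$: taking $u=Me_{1}$, the left-hand side grows like $M^{2\sigma}$ while the right-hand side is of order $\varepsilon M^{2}+C$, so the bound fails as $M\to\infty$ (and for $\sigma=1$ it fails for small $\varepsilon$). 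This is fatal precisely where the lemma is strongest, namely the defocusing case $\lambda=-1$, which allows every $\sigma\in[0,\infty)$ including the cubic case $\sigma=1$; it also affects the focusing case for $\sigma\in[1,2)$. The paper avoids this as follows. For $\lambda=1$ the nonlinear trace term enters with a favorable (negative) sign, because $H_{*}$ contains $-\frac{1}{2\sigma+2}|u|_{2\sigma+2}^{2\sigma+2}$, so it is simply dropped. For $\lambda=-1$ it is positive, but the damping term supplies $\left(Au+|u|^{2\sigma}u,-\alpha u\right)=-\alpha\|u\|_{1}^{2}-\alpha|u|_{2\sigma+2}^{2\sigma+2}$, and the trace term is absorbed into the second piece via H\"{o}lder's inequality on the unit interval and Young's inequality, $|u|_{2\sigma}^{2\sigma}\leq|u|_{2\sigma+2}^{2\sigma}\leq\varepsilon|u|_{2\sigma+2}^{2\sigma+2}+C_{\varepsilon}$, which is valid for all $\sigma\geq 0$ and yields \eqref{k1}. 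Your argument becomes correct if you replace the absorption target $\varepsilon|\nabla u|_{2}^{2}$ by $\varepsilon|u|_{2\sigma+2}^{2\sigma+2}$ in the defocusing case and observe the sign of the trace term in the focusing case; as written, the step fails.
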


\begin{proof}
\indent Using It\^o's formula to $|u|_{2}^{2+ \frac{4\sigma}{2-\sigma}}$, we obtain the estimate
\begin{eqnarray*}
d|u|_{2}^{2+ \frac{4\sigma}{2-\sigma}}=\left(2+\frac{4\sigma}{2-\sigma}\right)|u|_{2}^{\frac{4\sigma}{2-\sigma}}\left(u,-iAu
+i\lambda |u|^{2\sigma}u-\alpha u\right)dt+\left(2+\frac{4\sigma}{2-\sigma}\right)|u|_{2}^{\frac{4\sigma}{2-\sigma}}(u,bdW)+\Rmnum{1}+\Rmnum{2},
\end{eqnarray*}
where
\begin{align*}
& \Rmnum{1}=\frac{1}{2}\left(2+\frac{4\sigma}{2-\sigma}\right)\left(\frac{4\sigma}{2-\sigma}\right)|u|_{2}^{ \frac{4\sigma}{2-\sigma}-2}\left|b^{*}u\right|_{2}^{2}dt, \\
& \Rmnum{2}=\frac{1}{2}B_{0}\left(2+\frac{4\sigma}{2-\sigma}\right)\left|u\right|_{2}^{\frac{4\sigma}{2-\sigma}}dt.
\end{align*}
Young's inequality implies
\begin{align*}
\Rmnum{1}+\Rmnum{2}\leq C\left[\left(2+\frac{4\sigma}{2-\sigma}\right)\left(\frac{2\sigma}{2-\sigma}\right)+\left(1+\frac{2\sigma}{2-\sigma}\right)\right]B_{0}|u|_{2}^{\frac{4\sigma}{2-\sigma}}dt
\leq \frac{\alpha}{2}|u|_{2}^{2+ \frac{4\sigma}{2-\sigma}}dt+Cdt.
\end{align*}
It follows from the last inequality that
\begin{align}\label{g1}
d|u|_{2}^{2+ \frac{4\sigma}{2-\sigma}}+\alpha\left(\frac{3}{2}+\frac{4\sigma}{2-\sigma}\right)|u|_{2}^{2+ \frac{4\sigma}{2-\sigma}}dt\leq \left(2+\frac{4\sigma}{2-\sigma}\right)|u|_{2}^{\frac{4\sigma}{2-\sigma}}(u,bdW)+Cdt.
\end{align}
\\
\textbf{The first case:} $\mathbf{\lambda=1}$.

Applying It\^o's formula to $H^{*}(u)$, we find
\begin{align*}
dH^{*}(u)
=&\left(Au-|u|^{2\sigma}u,-iAu+i|u|^{2\sigma}u-\alpha u\right)dt +\left(Au-|u|^{2\sigma}u,bdW\right) \\
&-\frac{1}{2}\sum\limits_{n=1}^{\infty}b_{n}^{2}\int_{0}^{1}\left(2\sigma|u|^{2\sigma-2}\left(\mathcal{R}(u\bar{e}_{n})\right)^{2}+|u|^{2\sigma}|e_{n}|^{2}\right)dxdt+\frac{1}{2}B_{1}dt\\
\leq& \left(-\alpha \|u\|_{1}^{2}+\alpha |u|_{2\sigma +2}^{2\sigma +2}\right)dt +\left(Au -|u|^{2\sigma}u,bdW\right)+\frac{1}{2}B_{1}dt.
\end{align*}
Consequently
\begin{align}\label{g2}
dH^{*}(u)+\left(\alpha \|u\|_{1}^{2}-\alpha |u|_{2\sigma +2}^{2\sigma +2}\right)dt \leq \left(Au -|u|^{2\sigma}u,bdW\right)+\frac{1}{2}B_{1}dt.
\end{align}
Employing (\ref{g1})-(\ref{g2}), we deduce
\begin{align*}
&dH(u)+\left(\alpha \|u\|_{1}^{2}-\alpha |u|_{2\sigma +2}^{2\sigma +2}+G\alpha\left(\frac{3}{2}+\frac{4\sigma}{2-\sigma}\right)|u|_{2}^{2+ \frac{4\sigma}{2-\sigma}}\right)dt \\
&\leq \left(Au -|u|^{2\sigma}u,bdW\right)+G\left(2+\frac{4\sigma}{2-\sigma}\right)|u|_{2}^{\frac{4\sigma}{2-\sigma}}(u,bdW)
+C_{1}dt.
\end{align*}
Using Gagliardo-Nirenberg's inequality, we have
\begin{align*}
&\alpha \|u\|_{1}^{2}-\alpha |u|_{2\sigma +2}^{2\sigma +2}+G\alpha\left(\frac{3}{2}+\frac{4\sigma}{2-\sigma}\right)|u|_{2}^{2+ \frac{4\sigma}{2-\sigma}} \\
&\geq \alpha \|u\|_{1}^{2}- \frac{\alpha}{2\sigma +2}\|u\|_{1}^{2}-\frac{G\alpha}{2}|u|_{2}^{2+ \frac{4\sigma}{2-\sigma}}+G\alpha\left(\frac{3 }{2}+\frac{4\sigma}{2-\sigma}\right)|u|_{2}^{2+ \frac{4\sigma}{2-\sigma}} \\
&\geq \alpha\frac{2\sigma +1}{2\sigma +2}\|u\|_{1}^{2}+G\alpha\left(1+\frac{4\sigma}{2-\sigma}\right)|u|_{2}^{2+ \frac{4\sigma}{2-\sigma}} \\
&\geq \alpha H(u).
\end{align*}
So
\begin{align}
dH(u)+\alpha H(u)dt\leq \left(Au -|u|^{2\sigma}u,bdW\right)+G\left(2+\frac{4\sigma}{2-\sigma}\right)|u|_{2}^{\frac{4\sigma}{2-\sigma}}(u,bdW)+C_{1}dt.
\end{align}
We apply It\^{o}'s formula to $H^{k}(u)$, then
\begin{align*}
dH^{k}(u)\leq &kH^{k-1}(u)\left[-\alpha H(u)dt+\left(Au-|u|^{2\sigma}u,bdW\right)+G\left(2+\frac{4\sigma}{2-\sigma}\right)|u|_{2}^{\frac{4\sigma}{2-\sigma}}(u,bdW)+C_{1}dt\right] \\
&+\frac{1}{2}k(k-1)H^{k-2}(u)d\langle M_{1}\rangle,
\end{align*}
where
\[
dM_{1}=\left(Au-|u|^{2\sigma}u,bdW\right)+G\left(2+\frac{4\sigma}{2-\sigma}\right)|u|_{2}^{\frac{4\sigma}{2-\sigma}}(u,bdW),
\]
\begin{align}\label{g5}
d\langle M_{1}\rangle\leq C\left(B_{1}\|u\|_{1}^{2}+B_{1}|u|_{2\sigma+2}^{2(2\sigma+1)}+B_{0}|u|_{2}^{\frac{8\sigma}{2-\sigma}+2}\right)dt
\leq Cdt+2\varepsilon_{1}\alpha\frac{1}{k} H^{2}(u)dt.
\end{align}
Applying Young's inequality, we compute
\begin{align*}
&C\left[kH^{k-1}\left(u\right)+\frac{1}{2}k(k-1)H^{k-2}(u)\right]dt+\alpha k\varepsilon_{1} H^{k}(u)dt \\
&\leq \varepsilon \alpha kH^{k}(u)dt+\alpha k\varepsilon_{1} H^{k}(u)dt+C_{k}dt.
\end{align*}
We choose $\varepsilon$, $\varepsilon_{1}$ so small that $\varepsilon+\varepsilon_{1}\leq\frac{1}{2}$.
Hence
\begin{align}\label{g6}
dH^{k}(u)+\frac{1}{2}\alpha kH^{k}(u)dt\leq kH^{k-1}(u)dM_{1}+C_{k}dt.
\end{align}
\\
\textbf{The second case:} $\mathbf{\lambda=-1}$.

Using It\^o's formula to $H(u)$, we find
\begin{align*}
dH(u)
=&\left(Au+|u|^{2\sigma}u,-iAu-i|u|^{2\sigma}u-\alpha u\right)dt +\left(Au+|u|^{2\sigma}u,bdW\right) \\
&+\frac{1}{2}\sum\limits_{n=1}^{\infty}b_{n}^{2}\int_{0}^{1}\left(2\sigma|u|^{2\sigma-2}(\mathcal{R}(u\bar{e}_{n}))^{2}+|u|^{2\sigma}|e_{n}|^{2}\right)dxdt+\frac{1}{2}B_{1}dt\\
\leq& \left(-\alpha \|u\|_{1}^{2}-\alpha |u|_{2\sigma +2}^{2\sigma +2}\right)dt +\left(Au +|u|^{2\sigma}u,bdW\right)+\frac{1}{2}B_{0}\left(2\sigma+1\right)|u|_{2\sigma}^{2\sigma}dt+\frac{1}{2}B_{1}dt.
\end{align*}
We infer from Young's inequality and Sobolev's embedding inequality that
\[
dH(u)\leq \left(-\alpha \|u\|_{1}^{2}-\alpha |u|_{2\sigma +2}^{2\sigma +2}\right)dt +\left(Au +|u|^{2\sigma}u,bdW\right)+\frac{1}{2}\alpha |u|_{2\sigma+2}^{2\sigma+2}dt+C_{1}dt.
\]
Thus
\begin{align}\label{k1}
dH(u)+\alpha H(u)dt\leq \left(Au +|u|^{2\sigma}u,bdW\right)+C_{1}dt.
\end{align}
Next, we apply It\^o's formula to $H^{k}(u)$, we then have
\[
dH^{k}(u)+\alpha kH^{k}(u)dt\leq kH^{k-1}(u)\left(Au+|u|^{2\sigma}u,bdW\right)+kC_{1}H^{k-1}(u)dt+\frac{1}{2}k(k-1)H^{k-2}(u)d\langle M_{1}^{'}\rangle,
\]
where
\[
dM_{1}^{'}=\left(Au+|u|^{2\sigma}u,bdW\right),
\]
\begin{align}\label{k2}
d\langle M_{1}^{'}\rangle \leq B_{1}\|u\|_{1}^{2}dt+B_{1}|u|_{2\sigma+2}^{2(2\sigma+1)}dt
\leq 2\varepsilon_{2} \alpha\frac{1}{k} H^{2}(u)dt+Cdt.
\end{align}
Applying Young's inequality, we deduce
\begin{align*}
&C\left[kH^{k-1}(u)+\frac{1}{2}k(k-1)H^{k-2}(u)\right]dt+\alpha k\varepsilon_{2} H^{k}(u)dt \\
&\leq \alpha k\varepsilon_{3} H^{k}(u)dt+\alpha k\varepsilon_{2} H^{k}(u)dt+C_{k}dt.
\end{align*}
We choose $\varepsilon_{2}$, $\varepsilon_{3}$ which are very small, then
\begin{align}\label{k3}
dH^{k}(u)+\frac{1}{2}\alpha kH^{k}(u)dt\leq kH^{k-1}(u)\left(Au+|u|^{2\sigma}u,bdW\right)+C_{k}dt.
\end{align}
\end{proof}

\begin{lem}\label{lemma3.2}
For any $k\in \mathds N \setminus\{0\}$,
$t\in \mathds R ^{+}$ and stopping time $\tau$, there exists $C_{k}^{'}>0$ such that
\begin{align*}
&\mathbb{E}\left(H^{k}(u(t))\right)\leq \exp\left(-\frac{\alpha}{2}kt\right)H^{k}(u_{0})+ \frac{C_{k}^{'}}{2}, \\
&\mathbb{E}\left(H^{k}(u(\tau))\right)\leq H^{k}(u_{0})+C_{k}^{'}\mathbb{E}(\tau).
\end{align*}

\end{lem}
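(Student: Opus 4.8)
The plan is to derive both estimates from the Itô differential inequalities established in Lemma \ref{lemma3.1}, the only genuine subtlety being the justification that the stochastic integrals carry zero expectation. For the first (exponentially decaying) estimate I would start from
\[
dH^{k}(u)+\tfrac{1}{2}\alpha kH^{k}(u)dt\leq kH^{k-1}(u)dM_{1}+C_{k}dt
\]
(and its analogue with $M_{1}^{'}$ when $\lambda=-1$), multiply by the integrating factor $e^{\frac{1}{2}\alpha kt}$, and apply the product rule to get
\[
d\left(e^{\frac{1}{2}\alpha kt}H^{k}(u)\right)\leq e^{\frac{1}{2}\alpha kt}\left(kH^{k-1}(u)dM_{1}+C_{k}dt\right).
\]
Integrating on $[0,t]$ yields
\[
e^{\frac{1}{2}\alpha kt}H^{k}(u(t))\leq H^{k}(u_{0})+\int_{0}^{t}e^{\frac{1}{2}\alpha ks}kH^{k-1}(u)dM_{1}+C_{k}\int_{0}^{t}e^{\frac{1}{2}\alpha ks}ds.
\]

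To take expectations, and since the stochastic integral is a priori only a local martingale, I would introduce the localizing stopping times $\tau_{n}=\inf\{t\geq0:\ H(u(t))\geq n\}$. Because $H^{k}\geq0$, stopping at $t\wedge\tau_{n}$ turns the stopped stochastic integral into a true martingale of zero mean, so that
\[
\mathbb{E}\left(e^{\frac{1}{2}\alpha k(t\wedge\tau_{n})}H^{k}(u(t\wedge\tau_{n}))\right)\leq H^{k}(u_{0})+C_{k}\,\mathbb{E}\int_{0}^{t\wedge\tau_{n}}e^{\frac{1}{2}\alpha ks}ds\leq H^{k}(u_{0})+\frac{2C_{k}}{\alpha k}e^{\frac{1}{2}\alpha kt}.
\]
Letting $n\to\infty$ (so that $\tau_{n}\to\infty$ a.s.), applying Fatou's lemma on the left using $H^{k}\geq0$, and dividing by $e^{\frac{1}{2}\alpha kt}$ gives
\[
\mathbb{E}H^{k}(u(t))\leq e^{-\frac{1}{2}\alpha kt}H^{k}(u_{0})+\frac{2C_{k}}{\alpha k},
\]
which is the first claim with $C_{k}^{'}:=4C_{k}/(\alpha k)$.

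For the stopping-time estimate I would instead integrate the same inequality directly, without the integrating factor, up to $\tau\wedge t\wedge\tau_{n}$:
\[
H^{k}(u(\tau\wedge t\wedge\tau_{n}))+\tfrac{1}{2}\alpha k\int_{0}^{\tau\wedge t\wedge\tau_{n}}H^{k}(u)ds\leq H^{k}(u_{0})+\int_{0}^{\tau\wedge t\wedge\tau_{n}}kH^{k-1}(u)dM_{1}+C_{k}(\tau\wedge t\wedge\tau_{n}).
\]
Taking expectations, dropping the nonnegative integral term, and discarding the zero-mean stopped martingale gives $\mathbb{E}H^{k}(u(\tau\wedge t\wedge\tau_{n}))\leq H^{k}(u_{0})+C_{k}\mathbb{E}(\tau)$. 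Passing to the limit $n\to\infty$ and then $t\to\infty$ by Fatou's lemma (again using $H^{k}\geq0$) produces the second estimate once $C_{k}^{'}\geq C_{k}$.

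The main obstacle is precisely this localization argument: one must ensure $\tau_{n}\to\infty$ almost surely, i.e. that $H$ exhibits no finite-time blow-up, which rests on the global well-posedness of (\ref{a2}), and one must verify that the quadratic-variation terms controlling the martingale (cf. (\ref{g5}) and (\ref{k2}), which are bounded by $C+\varepsilon\alpha H^{2}(u)/k$) are integrable along the localized trajectories so that the stopped integrals are genuine martingales. Once this integrability and a.s. non-explosion are in hand, the vanishing of the martingale expectations and the passage to the limit are routine.
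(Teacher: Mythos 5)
Your proposal is correct and follows essentially the same route as the paper: multiply the It\^{o} inequality from Lemma \ref{lemma3.1} by the integrating factor $e^{\frac{1}{2}\alpha kt}$, integrate, take expectations for the first bound, and truncate the stopping time and pass to the limit via Fatou's lemma for the second. The only difference is that you make explicit the localization argument (stopping at $\tau_{n}$ so the stochastic integral is a true zero-mean martingale), a standard step the paper's proof performs implicitly when it takes expectations.
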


\begin{proof}
\textbf{The first case:} $\mathbf{\lambda=1}$.

Multiplying (\ref{g6}) by $\exp\left(\frac{1}{2}\alpha kt\right)$, we deduce
\[
d\left(\exp\left(\frac{1}{2}\alpha kt\right)H^{k}\left(u(t)\right)\right)\leq \exp\left(\frac{1}{2}\alpha kt\right)kH^{k-1}(u(t))dM_{1}(t)+C_{k}\exp\left(\frac{1}{2}\alpha kt\right)dt.
\]
We integrate the last inequality from $0$ to $t$ to find
\[
\exp\left(\frac{1}{2}\alpha kt\right)H^{k}(u(t))\leq H^{k}(u_{0})+\int_{0}^{t}\exp\left(\frac{1}{2}\alpha ks\right)kH^{k-1}(u(s))dM_{1}(s)+C_{k}\int_{0}^{t}\exp\left(\frac{1}{2}\alpha ks\right)ds.
\]
Hence
\[
H^{k}(u(t))\leq \exp\left(-\frac{1}{2}\alpha kt\right)H^{k}(u_{0})+\int_{0}^{t}\exp\left(-\frac{1}{2}\alpha k(t-s)\right)kH^{k-1}(u(s))dM_{1}(s)+C_{k}\frac{2}{\alpha k}.
\]
Taking the expectation, we have
\[
\mathbb{E}(H^{k}(u(t)))\leq \exp\left(-\frac{1}{2}\alpha kt\right)H^{k}(u_{0})+\frac{C_{k}^{'}}{2},
\]
which implies the first inequality of Lemma \ref{lemma3.2} holds.

We now assume that $M>0$ is a constant and $\tau<M$ is a bounded stopping time. Then integrating (\ref{g6}) from 0 to $\tau$ and taking the expectation, we compute
\[
\mathbb{E}\left(H^{k}(u(\tau))\right)\leq H^{k}(u_{0})+C_{k}^{'}\mathbb{E}(\tau).
\]
Therefore, the second inequality of Lemma \ref{lemma3.2} for bounded stopping times follows.

Assume that $\tau$ is a general stopping time. We consider the second inequality of Lemma \ref{lemma3.2} for the stopping time $\tau\wedge M$, we have
\[
\mathbb{E}\left(H^{k}(u(\tau\wedge M))\right)\leq H^{k}(u_{0})+C_{k}^{'}\mathbb{E}(\tau\wedge M).
\]
By Fatou's Lemma and lower semicontinuity, when $M\longrightarrow\infty$, we calculate
\begin{align*}
\mathbb{E}\left(H^{k}(u(\tau))\right)\leq
\liminf\limits_{{M\rightarrow\infty}}\mathbb{E}\left(H^{k}(u(\tau\wedge M))\right)
\leq \limsup\limits_{{M\rightarrow\infty}}\left(H^{k}(u_{0})+C_{k}^{'}\mathbb{E}(\tau\wedge M)\right)
\leq H^{k}(u_{0})+C_{k}^{'}\mathbb{E}(\tau),
\end{align*}
which yields the second inequality of Lemma \ref{lemma3.2}.

The similar argument holds for the second case: $\lambda=-1$.
\end{proof}

\begin{lem}\label{lemma3.3}
Suppose that $u$ is a solution of (\ref{a2}) associated with a Wiener process W. Then
for any $(k,p)\in(\mathds N \setminus\{0\})^{2}$, $\rho>0$ and $0\leq T<\infty$, we have the estimates
\begin{align*}
\mathbb{P}\left(\sup \limits_{t\in [0,T]}\left(E_{u,k}(t)-C_{k}^{'}t\right)\geq H^{k}(u_{0})+\rho\left(H^{2k}(u_{0})+ T\right)\right)
\leq K_{k,p}\rho^{-p}, \\
\mathbb{P}\left(\sup \limits_{t\in [T,\infty)}\left(E_{u,k}(t)-C_{k}^{'}t\right)\geq H^{k}(u_{0})+H^{2k}(u_{0})+ 1+\rho\right)
\leq K_{k,p}\left(\rho+T\right)^{-p},
\end{align*}
the constants $C_{k}^{'}$ and $K_{k,p}$ depending only on $k$ and $p$.
\end{lem}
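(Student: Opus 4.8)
The plan is to integrate the Itô inequalities of Lemma~\ref{lemma3.1}, dominate $E_{u,k}(t)-C_k t-H^k(u_0)$ by a single stochastic integral, and then control that martingale by a maximal inequality whose moments come from Lemma~\ref{lemma3.2}.

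First I would integrate (\ref{g6}) (when $\lambda=1$) or (\ref{k3}) (when $\lambda=-1$) from $0$ to $t$. Since $\frac12\alpha k\int_0^t H^k(u(s))\,ds$ is precisely the running term in $E_{u,k}(t)$, this yields the pathwise domination $E_{u,k}(t)-C_k t-H^k(u_0)\le N_k(t)$, where $N_k(t)=\int_0^t kH^{k-1}(u(s))\,dM_1(s)$ is a continuous local martingale ($M_1$ replaced by $M_1'$ in the defocusing case). Because the lemma subtracts $C_k' t$ with $C_k'\ge C_k$, the relevant events only shrink, so it suffices to bound the tails of $\sup_t N_k(t)$. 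Using (\ref{g5}) (resp.\ (\ref{k2})) together with Young's inequality I would bound the bracket by $\langle N_k\rangle_t\le C\int_0^t\bigl(1+H^{2k}(u(s))\bigr)\,ds$; this is also where the term $H^{2k}(u_0)$ in the statement originates.

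For the first estimate I would combine the Burkholder--Davis--Gundy inequality with Markov's inequality, $\mathbb{P}\bigl(\sup_{[0,T]}N_k\ge\lambda\bigr)\le C_p\lambda^{-p}\,\mathbb{E}\langle N_k\rangle_T^{p/2}$ with $\lambda=\rho\bigl(H^{2k}(u_0)+T\bigr)$. Then $\mathbb{E}\langle N_k\rangle_T^{p/2}\le C\,\mathbb{E}\bigl(T+\int_0^T H^{2k}\bigr)^{p/2}$, and by Hölder/Jensen $\bigl(\int_0^T H^{2k}\bigr)^{p/2}\le T^{p/2-1}\int_0^T H^{pk}$; taking expectations and invoking the first bound of Lemma~\ref{lemma3.2} at level $pk$ gives $\mathbb{E}\int_0^T H^{pk}(u(s))\,ds\le C\bigl(H^{pk}(u_0)+T\bigr)=C\bigl((H^{2k}(u_0))^{p/2}+T\bigr)$. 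Inserting this and using $(a+b)^p\ge 2^p a^{p/2}b^{p/2}$ to cancel the mismatched powers produces exactly $K_{k,p}\rho^{-p}$ (for $T$ bounded below; for small $T$ the estimate is trivial after enlarging $K_{k,p}$, since probabilities are at most $1$).

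The hard part is the second estimate, over the infinite horizon $[T,\infty)$: here $\langle N_k\rangle_\infty=\infty$ and $\sup_{[T,\infty)}N_k$ need not even be finite, so the dissipation must be used. I would take $C_k'>C_k$ strictly, upgrading the domination to $E_{u,k}(t)-C_k' t-H^k(u_0)\le N_k(t)-c't$ with $c'=C_k'-C_k>0$, which inserts a genuine negative drift. Splitting $[T,\infty)=\bigcup_{n\ge0}[T+n,T+n+1]$ and using $c't\ge c'(T+n)$ on each piece reduces the problem to bounding $\mathbb{P}\bigl(\sup_{[T+n,T+n+1]}N_k\ge \ell+c'(T+n)\bigr)$ with $\ell=H^{2k}(u_0)+1+\rho$. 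Writing $\sup_{[T+n,T+n+1]}N_k\le|N_k(T+n)|+\sup_{[T+n,T+n+1]}|N_k(t)-N_k(T+n)|$ and estimating each term by Burkholder--Davis--Gundy at a high power $p'$ — the endpoint through $\mathbb{E}|N_k(T+n)|^{p'}\le C\,\mathbb{E}\langle N_k\rangle_{T+n}^{p'/2}$, the increment through the conditional growth of $\langle N_k\rangle$, both controlled by Lemma~\ref{lemma3.2} — makes each summand decay polynomially in $\rho+T+n$ with an exponent that grows with $p'$. Choosing $p'$ large enough that $\sum_{n\ge0}$ converges and decays like $(\rho+T)^{-p}$ then closes the estimate. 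The delicate point is calibrating $C_k'$ and $p'$ so that the linear drift $c't$ dominates the $\sqrt{t}$-growth of the martingale uniformly in $n$; this balance is exactly where the weak dissipation of the equation forces a polynomial, rather than exponential, rate.
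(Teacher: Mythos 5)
Your proposal is correct and follows essentially the same route as the paper: both dominate $E_{u,k}(t)-C_k t-H^k(u_0)$ by the stochastic integral $\int_0^t kH^{k-1}(u)\,dM_1$, bound its bracket by $C_k\int_0^t\bigl(1+H^{2k}(u)\bigr)ds$, treat the finite horizon by a martingale maximal inequality plus bracket moments (your Jensen-in-time reduction to $\mathbb{E}\int_0^T H^{pk}$ is in fact sounder than the paper's ``inverse H\"older'' step), and handle $[T,\infty)$ by cutting into unit intervals on which the surplus drift $C_k'-C_k>0$ converts the fixed threshold into one growing linearly in $n$, after which tails of order $(1+\rho+n)^{-(p+1)}$ are summed. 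Your endpoint-plus-increment splitting on each unit interval is only a cosmetic variant of the paper's one-shot maximal inequality over $[0,n+1]$ at moment exponent $p+1$; note, though, that your dismissal of small $T$ (``probabilities are at most $1$'') does not by itself produce the required $\rho^{-p}$ decay when $H(u_0)$ and $T$ are both small --- an edge case that the paper's own final inequality glosses over in exactly the same way.
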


\begin{proof}
We only prove the case $\lambda=1$; the other case $\lambda=-1$ is similar.

We first set \[
dM_{k}(t)=kH^{k-1}(u(t))dM_{1}(t).
\]
Taking into account (\ref{g5}), we see that
\[
d\langle M_{k}\rangle(t) \leq C_{k}\left(1+H^{2k}(u(t))\right)dt.
\]
Integrating (\ref{g6}) from $0$ to $t$ and taking the expectation, we find for any $k\geq 1$,
\[
\mathbb{E}\int_{0}^{t}H^{k}(u(s))ds\leq C_{k}\left(H^{k}(u_{0})+t\right).
\]
Therefore, for any $p\geq 1$,
\begin{align*}
\mathbb{E} \langle M_{k}\rangle^{p}(t)
&\leq\mathbb{E}\left(\int_{0}^{t}C_{k}\left(H^{2k}(u(s))+1\right)ds\right)^{p} \\
&\leq \left(\mathbb{E}\int_{0}^{t}C_{k}\left(H^{2k}(u(s))+1\right)ds\right)^{p} \\
&\leq 2^{p}C_{k}^{p}\left(t^{p}+\left(\mathbb{E}\int_{0}^{t}H^{2k}(u(s))ds\right)^{p}\right) \\
&\leq C_{k,p}\left(H^{2kp}(u_{0})+t^{p}\right),
\end{align*}
where the second inequality holds by the inverse H\"{o}lder inequality.

According to (\ref{g6}) and the martingale inequality, we get
\begin{align*}
\mathbb{P}\left(\sup \limits_{t\in [0,T]}\left(E_{u,k}(t)-C_{k}^{'}t\right)\geq H^{k}(u_{0})+\rho\left(H^{2k}(u_{0})+ T\right)\right)
&\leq \frac{\mathbb{E}\langle M_{k}\rangle^{\frac{p}{2}}}{\left(\rho(H^{2k}(u_{0})+ T)\right)^{p}} \\
\leq \frac{C_{k,\frac{p}{2}}\left(H^{kp}(u_{0})+T^{\frac{p}{2}}\right)}{\left(\rho(H^{2k}(u_{0})+ T)\right)^{p}}
&\leq K_{k,p}\rho^{-p}.
\end{align*}
This proves the first inequality of Lemma \ref{lemma3.3}.

We similarly have
\begin{align*}
\mathbb{P}\left(\sup \limits_{[n,n+1]}\left(E_{u,k}(t)-C_{k}^{'}t\right)\geq H^{k}(u_{0})+H^{2k}(u_{0})+ 1+\rho+n\right)
&\leq \frac{\mathbb{E}\langle M_{k}\rangle^{p+1}}{\left(H^{2k}(u_{0})+ 1+\rho+n\right)^{2p+2}} \\
\leq \frac{C_{k,p+1}\left(H^{2k(p+1)}(u_{0})+(n+1)^{p+1}\right)}{\left(H^{2k}(u_{0})+ 1+\rho+n\right)^{2p+2}}
&\leq \frac{C_{k,p+1}}{\left(H^{2k}(u_{0})+ 1+\rho+n\right)^{p+1}}.
\end{align*}
Then summing the last inequality over $n\geq T$, where $T$ is an integer, we see that for any $(k,p) \in (\mathds{N} \setminus\{0\})^{2}$, there exists $K_{k,p}$ such that
\begin{align*}
\mathbb{P}\left(\sup \limits_{t\in[T,\infty)}M_{k}(t)\geq H^{2k}(u_{0})+ 1+\rho +t\right)
\leq K_{k,p}\left(\rho+T\right)^{-p}.
\end{align*}
In view of (\ref{g6}), we deduce the second inequality of Lemma \ref{lemma3.3}.
\end{proof}

\begin{lem}\label{lemma3.4}
Assume $W_{1}$ and $W_{2}$
are two cylindrical Wiener process on $L^{2}([0,1])$. Suppose also $\left(u_{i},W_{i}\right)_{i=1,2}$ is a pair of solutions of (\ref{a2}). If $R_{0}\geq \left(\sum_{i=1}^{2}H(u_{0}^{i})\right)\vee C_{1}^{'}$, then
\[
\mathbb{P}\left(H\left(u_{1}(t)\right)+H(u_{2}(t))\geq 4C_{1}^{'}\right)\leq \frac{1}{2},
\]
provided $t\geq\theta_{1}\left(R_{0}\right)=\frac{2}{\alpha}\ln\frac{R_{0}}{C_{1}^{'}}$.
\end{lem}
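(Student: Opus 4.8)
The plan is to combine the exponential moment decay of the energy provided by Lemma \ref{lemma3.2} (taken with $k=1$) with Markov's inequality. First I would apply the first estimate of Lemma \ref{lemma3.2} to each of the two solutions $u_1$ and $u_2$ separately, which gives
\[
\mathbb{E}\left(H(u_i(t))\right)\leq \exp\left(-\frac{\alpha}{2}t\right)H(u_0^i)+\frac{C_1'}{2},\qquad i=1,2.
\]
Summing these two inequalities and recalling that $H$ is non-negative (this is built into the definition of the modified energy in the focusing case, cf. (\ref{s4}), and holds automatically in the defocusing case) yields a bound on $\mathbb{E}\left(H(u_1(t))+H(u_2(t))\right)$ in terms of $\exp\left(-\frac{\alpha}{2}t\right)\left(H(u_0^1)+H(u_0^2)\right)$ and $C_1'$.

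Next I would invoke the hypothesis $R_0\geq\sum_{i=1}^{2}H(u_0^i)$ to replace the initial energies, obtaining
\[
\mathbb{E}\left(H(u_1(t))+H(u_2(t))\right)\leq \exp\left(-\frac{\alpha}{2}t\right)R_0+C_1'.
\]
The one elementary computation that needs care is matching the threshold: for $t\geq\theta_1(R_0)=\frac{2}{\alpha}\ln\frac{R_0}{C_1'}$, which is well-defined and non-negative precisely because the hypothesis also forces $R_0\geq C_1'$, one checks that $\exp\left(-\frac{\alpha}{2}t\right)R_0\leq C_1'$, so that the right-hand side above is at most $2C_1'$.

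Finally, since $H(u_1(t))+H(u_2(t))$ is a non-negative random variable, Markov's inequality with threshold $4C_1'$ gives
\[
\mathbb{P}\left(H(u_1(t))+H(u_2(t))\geq 4C_1'\right)\leq\frac{\mathbb{E}\left(H(u_1(t))+H(u_2(t))\right)}{4C_1'}\leq\frac{2C_1'}{4C_1'}=\frac{1}{2},
\]
which is the desired conclusion. I do not anticipate any genuine obstacle here: the argument is a direct consequence of the energy moment bound already established in Lemma \ref{lemma3.2}, and the only point requiring attention is the bookkeeping with the exponential decay factor so that the time threshold comes out exactly as $\theta_1(R_0)=\frac{2}{\alpha}\ln\frac{R_0}{C_1'}$.
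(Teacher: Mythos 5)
Your proof is correct and is exactly the argument the paper intends: its entire proof reads ``It follows from Chebyshev's inequality and Lemma \ref{lemma3.2}'', and your write-up simply fills in the details of that one-liner (the paper's ``Chebyshev'' is your first-moment Markov bound). The bookkeeping with the decay factor, the use of $R_0\geq C_1'$ to make $\theta_1(R_0)$ well-defined, and the nonnegativity of $H$ are all handled correctly.
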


\begin{proof}
It follows from Chebyshev's inequality and Lemma \ref{lemma3.2}.
\end{proof}

As in \cite{KS}, it is crucial to prove that the probability that a solution enters a ball of a small radius is controlled precisely. It is still true for the damped SNLS considered here. But its proof is more difficult than in the case of the Navier-Stokes equations.

\begin{lem}\label{proposition3.5}
Assume $R_{0}$, $R_{1}>0$. Then there exist $T_{-1}(R_{0},R_{1})\geq0$ and $\pi_{-1}(R_{1})>0$
such that
\[
\mathbb{P}\left(H\left(u(t,u_{0}^{1})\right)+H\left(u(t,u_{0}^{2})\right)\leq R_{1}\right)\geq \pi_{-1}\left(R_{1}\right),
\]
provided $H(u_{0}^{1})+H(u_{0}^{2})\leq R_{0}$ and $t\geq T_{-1}(R_{0},R_{1})$.
\end{lem}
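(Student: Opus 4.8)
The plan is to reach the small ball $\{H\le R_1\}$ in two stages linked by the Markov property: let the dissipation first carry the solutions into a ball of fixed radius, and then exploit an irreducibility (small-ball) property of the noise to push them from that fixed ball into the $R_1$-ball in one fixed time increment. Since $\{H(u(t,u_0^1))\le R_1/2\}\cap\{H(u(t,u_0^2))\le R_1/2\}\subset\{H(u(t,u_0^1))+H(u(t,u_0^2))\le R_1\}$, it suffices to control each solution separately and to demand that each individual energy drop below $R_1/2$; the joint lower bound then follows by independence of the two driving noises (as in Lemma \ref{lemma3.4}), or, should they share the noise, from the fact that a single favourable noise event steers both solutions at once. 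For the first stage, the Lyapunov estimate of Lemma \ref{lemma3.2} with $k=1$ together with Chebyshev's inequality gives, for the fixed radius $\rho_0:=4C_1'$, that $\mathbb{P}\left(H(u(s,u_0))\le\rho_0\right)\ge\frac12$ for every $s\ge\theta_1(R_0)$ whenever $H(u_0)\le R_0$. The key point is that this holds for all such times $s$, which is what will let me conclude for every $t\ge T_{-1}$ rather than a single instant.

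The second stage is the irreducibility step and is the main obstacle. I would first record that the noise-free equation is dissipative: setting $b=0$ in Lemma \ref{lemma3.1} removes both the martingale term and the constant $C_1$ (which originates solely from the noise covariances $B_s$), leaving $\frac{d}{dt}H(u_d)+\alpha H(u_d)\le0$. Hence the deterministic solution satisfies $H(u_d(t,v_0))\le e^{-\alpha t}H(v_0)$ and maps $\{H\le\rho_0\}$ into $\{H\le R_1/4\}$ after the fixed time $\theta_2:=\frac{1}{\alpha}\ln\frac{4\rho_0}{R_1}$, uniformly in $v_0$. Next I would introduce the stochastic convolution $z(t)=\int_0^t e^{-\alpha(t-s)}S(t-s)b\,dW(s)$, the solution of the linear damped equation $dz=-(iA+\alpha)z\,dt+b\,dW$ with $z(0)=0$. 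Since $b\in\mathcal{L}_2(L^2([0,1]),D(A^{\frac{3}{2}}))$, the process $z$ has continuous paths in $D(A^{\frac{3}{2}})\subset H_0^1([0,1])$ and is a centered Gaussian in $C([0,\theta_2];H_0^1([0,1]))$, so $0$ lies in the support of its law and therefore $p_\delta:=\mathbb{P}\left(\sup_{[0,\theta_2]}\|z\|_1\le\delta\right)>0$ for every $\delta>0$; crucially $p_\delta$ is independent of the initial datum.

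On the event $\{\sup_{[0,\theta_2]}\|z\|_1\le\delta\}$ I would write $u=z+v$, where $v$ solves the pathwise equation $\partial_t v=-(iA+\alpha)v+i\lambda|v+z|^{2\sigma}(v+z)$ with $v(0)=v_0$, and compare $v$ with the deterministic solution $u_d$ from the same datum. In dimension one the embedding $H^1\hookrightarrow L^\infty$ makes $u\mapsto|u|^{2\sigma}u$ Lipschitz on bounded subsets of $H_0^1([0,1])$; combined with the uniform a priori bounds on $\sup_{[0,\theta_2]}\|u\|_1$ and $\sup_{[0,\theta_2]}\|u_d\|_1$ in terms of $\rho_0$, a Gronwall estimate in $H_0^1([0,1])$ yields $\sup_{[0,\theta_2]}\|u(\cdot,v_0)-u_d(\cdot,v_0)\|_1\le C(\rho_0,\theta_2)\,\delta$ uniformly over $v_0\in\{H\le\rho_0\}$. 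Choosing $\delta=\delta(R_1,\rho_0)$ small enough then forces $H(u(\theta_2,v_0))\le R_1/2$ on this event, so that $\inf_{H(v_0)\le\rho_0}\mathbb{P}\left(H(u(\theta_2,v_0))\le R_1/2\right)\ge p_\delta>0$.

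Finally I would glue the two stages with the Markov property at time $s:=t-\theta_2$: for any $t\ge T_{-1}(R_0,R_1):=\theta_1(R_0)+\theta_2(R_1)$ one has $s\ge\theta_1(R_0)$, so conditioning on $\mathcal{F}_s$, landing in $\{H\le\rho_0\}$ with probability $\ge\frac12$ by the first stage and then applying the second stage gives $\mathbb{P}\left(H(u(t,u_0))\le R_1/2\right)\ge\frac12 p_\delta$. Applying this to each solution and combining as above yields $\pi_{-1}(R_1)=\frac14 p_\delta^2>0$, depending only on $R_1$, which is the assertion. The essential difficulty is precisely the uniform-in-$v_0$ perturbation estimate of the second stage, since $\{H\le\rho_0\}$ is only bounded and not compact in $H_0^1([0,1])$; this is handled by the uniform a priori energy bounds, which make the Gronwall constant depend on $\rho_0$ alone, and by the fact that the small-ball probability $p_\delta$ does not depend on the starting point.
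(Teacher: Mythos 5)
Your two-stage skeleton is the same as the paper's: the Lyapunov structure (Lemma \ref{lemma3.2} plus Chebyshev, i.e.\ Lemma \ref{lemma3.4}) brings the solutions into the fixed ball $\{H\leq 4C_{1}'\}$, and then a positive-probability small-noise event steers them into $\{H\leq R_{1}\}$ over one fixed time interval. Your lower bound $p_{\delta}>0$ via the Gaussian support of the stochastic convolution $z$ is a clean and valid substitute for the paper's explicit high/low-mode splitting of $bW$ in $H^{3}$. Where you genuinely diverge from the paper---and where the argument breaks---is the perturbation step inside the small-noise event.

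Your Gronwall comparison of $u$ with the deterministic solution $u_{d}$ in the $H_{0}^{1}$ norm requires $u\mapsto|u|^{2\sigma}u$ to be locally Lipschitz from $H_{0}^{1}$ \emph{into} $H_{0}^{1}$: estimating $\frac{d}{dt}\|w\|_{1}^{2}$ for $w=v-u_{d}$ forces you, after integration by parts, to bound $\left\|\nabla\left(|u|^{2\sigma}u-|u_{d}|^{2\sigma}u_{d}\right)\right\|_{L^{2}}$. This holds for $\sigma\geq\tfrac12$, but fails for $0<\sigma<\tfrac12$: the derivative $F'(u)=(\sigma+1)|u|^{2\sigma}\cdot\,+\,\sigma|u|^{2\sigma-2}u^{2}\,\overline{\cdot}\,$ is only H\"older continuous of exponent $2\sigma$ near $u=0$, so the best differential inequality you can close is of the form $\dot\phi\leq C\phi+C\phi^{2\sigma}+C\delta^{2\sigma}$ with $\phi=\|w\|_{1}$, $\phi(0)=0$, and this yields no smallness at all: the comparison ODE $\dot\phi=C\phi^{2\sigma}$, $\phi(0)=0$, has the nonzero solution $\left(C(1-2\sigma)t\right)^{1/(1-2\sigma)}$, independent of $\delta$. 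Since Lemma \ref{proposition3.5} must cover every $\sigma\in[0,2)$ (focusing) and $\sigma\in[0,\infty)$ (defocusing), this is a real gap, not an edge case. A secondary gap: the ``uniform a priori bounds'' on $\sup_{[0,\theta_{2}]}\|u\|_{1}$ on the small-noise event are asserted, not proved; they are not free and require a stopping/bootstrap argument. The paper sidesteps both issues by never comparing with the deterministic flow: it sets $v=u-bW$, derives the one-sided energy inequality (\ref{g8}) for $H(v)$---which uses only crude pointwise bounds such as $\left||z+h|^{2\sigma}(z+h)-|z|^{2\sigma}z\right|\leq C|h|\left(|z|^{2\sigma}+|h|^{2\sigma}\right)$, never a Lipschitz estimate in $H^{1}$---and closes with the stopping time $\tau=\inf\{t:H(v)>3R_{0}\}$ to obtain (\ref{g9}), i.e.\ exponential decay of $H(v)$ up to an $O(\delta)$ error, whence (\ref{g3}). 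If you wish to keep your decomposition through the stochastic convolution (which has the genuine advantage of needing only $\|z\|_{1}$ small, rather than $\|bW\|_{3}$ as in the paper), the repair is to run this same direct energy argument on $v=u-z$: there the H\"older-type losses land on the small quantity $z$ (producing harmless factors $\delta^{2\sigma\wedge1}$) instead of on the unknown difference $w$, and the comparison with $u_{d}$ becomes unnecessary.
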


\begin{proof}
According to Lemma \ref{lemma3.4}, it suffices to show Lemma \ref{proposition3.5} for $R_{0}=4C_{1}^{'}$ and $t=T_{-1}(R_{0},R_{1})$ (instead of $t\geq T_{-1}(R_{0},R_{1})$). Consequently, we only prove this lemma for $R_{0}=4C_{1}^{'}$.

Assume $T, \delta>0$. Applying Chebyshev's inequality, we deduce that there exists $N_{-2}=N_{-2}(T,\delta)\in \mathds{N}$ such that
\[
\mathbb{P}\left(\sup \limits_{t\in[0,T]}\left\|bQ_{N_{-2}}W(t)\right\|_{3}>\frac{\delta}{2}\right)\leq\frac{4}{\delta^{2}}\sum\limits_{n>N_{-2}}\mu_{n}^{3}b_{n}^{2}\leq \frac{1}{2}.
\]
Furthermore, since $P_{N_{-2}}W$ is a finite dimensional Brownian motion, we find
\[
\pi_{-3}\left(T,\delta,N_{-2}\right)=\mathbb{P}\left(\sup \limits_{t\in[0,T]}\left|P_{N_{-2}}W(t)\right|_{2}\leq \frac{\delta}{2}\left\|b\right\|_{\mathcal{L}_{2}\left(L^{2}([0,1]),H^{3}([0,1])\right)}^{^{-1}}\right)>0.
\]
Then we have
\[
\mathbb{P}\left(\sup \limits_{t\in[0,T]}\left\|bW(t)\right\|_{3}\leq\delta\right)\geq \mathbb{P}\left(\sup \limits_{t\in[0,T]}\left\|bQ_{N_{-2}}W(t)\right\|_{3}\leq\frac{\delta}{2}\right)\pi_{-3}\left(T,\delta,N_{-2}\right).
\]
Therefore
\[
\pi_{-2}(T,\delta)=\mathbb{P}\left(\sup \limits_{t\in[0,T]}\left\|bW(t)\right\|_{3}\leq \delta\right)>0.
\]
It suffices to prove that there exist $T_{-1}(R_{1}),\delta_{-1}(R_{1})>0$ such that
\begin{align}\label{g3}
\left\{\sup \limits_{t\in[0,T_{-1}]}\left\|bW(t)\right\|_{3}\leq\delta_{-1}\right\} \subset \left\{H\left(u(T_{-1},u_{0})\right)\leq \frac{1}{2}R_{1}\right\},
\end{align}
provided $H(u_{0})\leq \frac{1}{2}R_{0}$.

We turn now to prove (\ref{g3}). Let
\[
v=u(\cdot,u_{0})-bW.
\]
Then
\begin{align}\label{z2}
dv+\alpha vdt+iAvdt-i\lambda|v+bW|^{2\sigma}(v+bW)dt=\left(-\alpha-iA\right)bWdt.
\end{align}
Applying It\^o's formula to $|v|_{2}^{2}$, we find
\[
\frac{d|v|_{2}^{2}}{dt}+2\alpha |v|_{2}^{2}=\left(2v,i\lambda|v+bW|^{2\sigma}(v+bW)+(-\alpha-iA)bW\right).
\]
Since
$\left(v,i|v+bW|^{2\sigma}v\right)=0$, we deduce
\begin{align*}
&\left(2v,i\lambda|v+bW|^{2\sigma}(v+bW)+(-\alpha-iA)bW\right) \\
&\leq C\left(|v|,|v|^{2\sigma}|bW|\right)+C\left(|v|,|bW|^{2\sigma+1}\right)+|\left(2v,-\alpha bW\right)|+|\left(2v,-iA(bW)\right)| \\
&\leq C\|bW\|_{3}\left(1+\|v\|_{1}^{2\sigma+1}\right)\left(1+\|bW\|_{3}^{2\sigma}\right).
\end{align*}
Using It\^o's formula to $|v|_{2}^{2+\frac{4\sigma}{2-\sigma}}$, we have
\begin{align*}
\frac{d|v|_{2}^{2+\frac{4\sigma}{2-\sigma}}}{dt}&=\left(2+\frac{4\sigma}{2-\sigma}\right)|v|_{2}^{\frac{4\sigma}{2-\sigma}}\left(v,-\alpha v-iA v+i\lambda|v+bW|^{2\sigma}(v+bW)+(-\alpha-iA)bW\right)  \\
&\leq -\alpha\left(2+\frac{4\sigma}{2-\sigma}\right)|v|_{2}^{2+\frac{4\sigma}{2-\sigma}}+C\|bW\|_{3}\left(1+\|bW\|_{3}^{2\sigma}\right)\left(1+\|v\|_{1}^{1+2\sigma+\frac{4\sigma}{2-\sigma}}\right).
\end{align*}
So
\begin{align}\label{g4}
\frac{d|v|_{2}^{2+\frac{4\sigma}{2-\sigma}}}{dt}+\alpha\left(2+\frac{4\sigma}{2-\sigma}\right)|v|_{2}^{2+\frac{4\sigma}{2-\sigma}}\leq C\|bW\|_{3}\left(1+\|bW\|_{3}^{2\sigma}\right)\left(1+\|v\|_{1}^{1+2\sigma+\frac{4\sigma}{2-\sigma}}\right).
\end{align}
Then we apply It\^o's formula to $H^{*}(v)$ yields
\[
\frac{dH^{*}(v)}{dt}+\alpha \|v\|_{1}^{2}=-\left(Av-\lambda|v|^{2\sigma}v,(\alpha+iA)bW\right)+\alpha\left(\lambda|v+bW|^{2\sigma}(v+bW),v\right).
\]
We write
\begin{align*}
\Rmnum{1}_{1}=\alpha\left(\left(\lambda|v+bW|^{2\sigma}(v+bW),v\right)-\lambda|v|_{2\sigma+2}^{2\sigma+2}\right)
=\alpha\lambda\left(|v+bW|^{2\sigma}(v+bW)-|v|^{2\sigma}v,v\right).
\end{align*}
Then
\begin{align}\label{g7}
\frac{dH^{*}(v)}{dt}+\alpha \|v\|_{1}^{2}-\alpha\lambda|v|_{2\sigma+2}^{2\sigma+2}=\Rmnum{1}_{1}+\Rmnum{1}_{2},
\end{align}
where
\[
\Rmnum{1}_{2}=-\left(Av-\lambda|v|^{2\sigma}v,(\alpha+iA)bW\right).
\]
Recall that for any $z,h\in \mathds{C}$,
\[
\left||z+h|^{2\sigma}(z+h)-|z|^{2\sigma}z\right|\leq C|h|\left(|z|^{2\sigma}+|h|^{2\sigma}\right).
\]
We use the last inequality and H\"{o}lder's inequality to find
\begin{align}\label{z1}
\Rmnum{1}_{1}+\Rmnum{1}_{2}&\leq \left|\left(-Av,(\alpha+iA)bW\right)\right|+\left|\left(\lambda|v|^{2\sigma}v,(\alpha+iA)bW\right)\right|+C\left(\left(|bW|^{2\sigma}+|v|^{2\sigma}\right)|bW|,|v|\right) \\ \nonumber
&\leq C\|bW\|_{3}\left(1+\|v\|_{1}^{2\sigma+1}\right)\left(1+\|bW\|_{3}^{2\sigma}\right).
\end{align}
\\
\textbf{The first case:} $\mathbf{\lambda=1}$.

Combining now (\ref{g4})-(\ref{z1}), we deduce
\begin{align*}
&\frac{dH(v)}{dt}+\alpha \|v\|_{1}^{2}-\alpha|v|_{2\sigma+2}^{2\sigma+2}+G\alpha\left(2+\frac{4\sigma}{2-\sigma}\right)|v|_{2}^{2+\frac{4\sigma}{2-\sigma}} \\
&\leq C\|bW\|_{3}\left(1+\|v\|_{1}^{2\sigma+1}\right)\left(1+\| bW\|_{3}^{2\sigma}\right)
+C\|bW\|_{3}\left(1+\|bW\|_{3}^{2\sigma}\right)\left(1+\|v\|_{1}^{1+2\sigma+\frac{4\sigma}{2-\sigma}}\right).
\end{align*}
Applying Gagliardo-Nirenberg's inequality and taking into account (\ref{s1}), we calculate
\begin{align*}
&\alpha \|
v\|_{1}^{2}-\alpha|v|_{2\sigma+2}^{2\sigma+2}+G\alpha\left(2+\frac{4\sigma}{2-\sigma}\right)|v|_{2}^{2+\frac{4\sigma}{2-\sigma}} \\
&\geq \alpha \|v\|_{1}^{2}-\alpha\left(\frac{1}{2\sigma+2}\|
v\|_{1}^{2}+\frac{G}{2}|v|_{2}^{2+\frac{4\sigma}{2-\sigma}}\right)+G\alpha\left(2+\frac{4\sigma}{2-\sigma}\right)|v|_{2}^{2+\frac{4\sigma}{2-\sigma}} \\
&\geq\alpha H(v).
\end{align*}
Hence
\begin{align}\label{g8}
\frac{dH(v)}{dt}+\alpha H(v)\leq C\|bW\|_{3}\left(1+\|bW\|_{3}^{2\sigma}\right)\left(1+\|v\|_{1}^{1+2\sigma+\frac{4\sigma}{2-\sigma}}\right).
\end{align}
Let us assume that $T, \delta, R_{1}^{'}>0$. We also suppose
\[
\sup \limits_{t\in[0,T]} \|bW(t)\|_{3}\leq\delta
\]
and set
\[
\tau=\inf \left\{t\in[0,T] \ | \ H(v)> 3R_{0}\right\}.
\]
Integrating (\ref{g8}) from $0$ to $t$, we have
\begin{align}\label{g9}
H(v(t))\leq \frac{1}{2}\exp\left(-\alpha t\right)R_{0}+\frac{C}{\alpha}\delta\left(1+\delta^{2\sigma}\right)\left(1+R_{0}^{\sigma+1+\frac{2\sigma}{2-\sigma}}\right),
\end{align}
provided $t\leq\tau$.
Then we choose $\delta_{-2}(R_{1}^{'})$ such that
\[
\frac{C}{\alpha}\delta\left(1+\delta^{2\sigma}\right)\left(1+R_{0}^{\sigma+1+\frac{2\sigma}{2-\sigma}}\right)\leq R_{1}^{'}\wedge R_{0}.
\]
for any $\delta\leq\delta_{-2}(R_{1}^{'})$. Thus from (\ref{g9}), it follows that
$\tau= T$ and that $H\left(v(T)\right)\leq 2R_{1}^{'}$,
provided $T\geq \frac{1}{\alpha}\ln\left(\frac{R_{0}}{2R_{1}^{'}}\right)$.
We remark that
\[
H(u(T))\leq C\left(H(bW(T))+H(v(T))\right)\leq C\left(\delta^{2}\left(1+\delta^{\frac{4\sigma}{2-\sigma}}\right)+R_{1}^{'}\right).
\]
Then we choose $\delta$ and $R_{1}^{'}$ sufficiently small to derive (\ref{g3}).

The proof for the case $\lambda=-1$ is similar, so we omit it.
\end{proof}

\begin{lem}\label{proposition3.6}
Assume for any $k_{0}>0$ and $N\in \mathds N \setminus\{0\}$,
$W_{1}$, $W_{2}$ are two cylindrical Wiener processes, h is an adapted process with continuous paths in
 $P_{N}L^{2}\left([0,1]\right)$, $u_{1}$ is a solution in $C\left([0,T];H_{0}^{1}([0,1])\right)$ of
\begin{equation*}
\begin{cases}
du_{1}+\alpha u_{1}dt+iAu_{1}dt-i\lambda |u_{1}|^{2\sigma}u_{1}dt=bdW_{1}+hdt \\
u_{1}(0)=u_{0}^{1},
\end{cases}
\end{equation*}
$u_{2}$ is the solution of (\ref{a2}) for $u_{0}=u_{0}^{2}$ and $W=W_{2}$ and $\tau$
is a stopping time. Suppose also that
\begin{align}\label{z8}
P_{N}u_{1}=P_{N}u_{2}, Q_{N}W_{1}=Q_{N}W_{2} \qquad   on \indent [0,\tau]
\end{align}
and
\begin{align}\label{z9}
\|h(t)\|_{1}^{2}\leq k_{0}\left(l(u_{1}(t)+u_{2}(t))\right)^{\frac{2\sigma+1}{3\sigma+1}} \qquad   on \indent [0,\tau],
\end{align}
then there exists $\Lambda >0$ depending only on $k_{0}$ such that
\begin{align}\label{b1}
\mathbb{E}\left[J_{FP}^{N}(u_{1},u_{2},r)(t\wedge \tau)\right]\leq J\left(u_{0}^{1},u_{0}^{2},r_{0}\right)  \qquad t>0,
\end{align}
where $r_{0}=u_{0}^{1}-u_{0}^{2}$.
\end{lem}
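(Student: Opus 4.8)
The plan is to prove that the stopped process $t\mapsto J_{FP}^{N}(u_{1},u_{2},r)(t\wedge\tau)$ is a supermartingale. Since the exponential weight equals $1$ at $t=0$ and $J_{FP}^{N}(u_{1},u_{2},r)(0)=J(u_{0}^{1},u_{0}^{2},r_{0})$, this at once gives (\ref{b1}). First I would record the equation for the difference $r=u_{1}-u_{2}$. Subtracting the equation for $u_{2}$ from the one for $u_{1}$ and writing $F(u_{1})-F(u_{2})=\int_{0}^{1}F'(\tau u_{1}+(1-\tau)u_{2})r\,d\tau$, one gets
\[
dr+\alpha r\,dt+iAr\,dt-i\lambda\big(F(u_{1})-F(u_{2})\big)dt=b\,d(W_{1}-W_{2})+h\,dt .
\]
On $[0,\tau]$ the coupling hypothesis (\ref{z8}) forces $P_{N}r=0$, so $r=Q_{N}r$ is purely high-frequency; moreover $Q_{N}(W_{1}-W_{2})=0$ and $h\in P_{N}L^{2}$, so applying $Q_{N}$ removes both the stochastic forcing and the control from the $r$-equation, leaving the pathwise (random-coefficient) equation $dr=\big(-\alpha r-iAr+i\lambda Q_{N}(F(u_{1})-F(u_{2}))\big)dt$ on $[0,\tau]$. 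Hence the only randomness left in $J$ enters through the coefficients $u_{1},u_{2}$.

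Next I would apply It\^{o}'s formula to $J(u_{1}(t),u_{2}(t),r(t))$ and then the product rule to $J_{FP}^{N}=\exp\!\big(2\alpha t-\tfrac{\Lambda}{N^{1/4}}\int_{0}^{t}l\,ds\big)J$. Two structural facts organize the computation. First, $J$ is homogeneous of degree two in $r$ with (frozen) coefficients, so the damping $-\alpha r$ produces the leading term $-2\alpha J$. Second, $J$ is, up to the coercivity correction, the second variation of the Hamiltonian $H_{*}$ along the segment joining $u_{1}$ and $u_{2}$, so the skew-symmetric operator $-iA$ and the principal nonlinear coupling through $\tilde F:=F(u_{1})-F(u_{2})$ cancel in $\tfrac{d}{dt}J$; here one also checks that replacing $\tilde F$ by $Q_{N}\tilde F$ costs nothing, because $P_{N}\tilde F$ is orthogonal to the high-frequency quantities $Ar$ and $r$. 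The martingale part of $dJ$ arises only from the noise in $u_{1},u_{2}$ and vanishes in expectation after a standard localization using the moment bounds of Lemma \ref{lemma3.2}. Therefore $J_{FP}^{N}(\cdot\wedge\tau)$ is a supermartingale once
\[
\mathrm{drift}\,(J)\ \leq\ \Big(-2\alpha+\tfrac{\Lambda}{N^{1/4}}\,l(u_{1},u_{2})\Big)J ,
\]
that is, once every remaining term of $\mathrm{drift}\,(J)+2\alpha J$ is dominated by $\tfrac{\Lambda}{N^{1/4}}\,l\,J$.

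The remaining terms are of three kinds: (i) those in which the time derivative falls on the coefficient $F'(\tau u_{1}+(1-\tau)u_{2})$, i.e.\ on $u_{1},u_{2}$; (ii) the It\^{o} correction terms generated by the noise in $u_{1},u_{2}$; and (iii) the contribution of $h$ entering through $du_{1}$. I would estimate each by Sobolev and Gagliardo--Nirenberg inequalities together with $|F'(w)|\leq C|w|^{2\sigma}$, turning powers of $\|u_{i}\|_{1}$ into powers of $H(u_{i})$ via (\ref{s4}), hence into powers of $l=1+\sum_{i}H^{3\sigma+1}(u_{i})$. Two mechanisms make everything close. The coercivity $J\geq\tfrac12|\nabla r|_{2}^{2}$ lets me replace $\|r\|_{1}^{2}$ by $2J$; and the spectral gap, $r=Q_{N}r$ with eigenvalues of $A=-\Delta$ growing like $n^{2}$, gives every low-order norm of $r$ a negative power of $N$, for instance $|r|_{2}\leq\mu_{N+1}^{-1/2}\|r\|_{1}$, so that each bad term carries a factor $N^{-\gamma}$ with a fixed $\gamma>0$ (worst case $\tfrac14$, whence the exponent in the weight). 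For the $h$-term I would use (\ref{z9}): its exponent $\tfrac{2\sigma+1}{3\sigma+1}$ is calibrated so that, after Young's inequality, the total power of $l$ produced by $\|h\|_{1}$ and by $\|u_{1}\|_{1}^{2\sigma}$ does not exceed $1$. Collecting the estimates, each remaining term is bounded by $C(k_{0})N^{-1/4}\,l\,J$, so choosing $\Lambda=\Lambda(k_{0})\geq 2C(k_{0})$ gives the displayed drift inequality and hence (\ref{b1}). Both signs $\lambda=\pm1$ are treated at once, the sign of the nonlinear term being matched by the sign in the definition of $J$ so that $J\geq\tfrac12|\nabla r|_{2}^{2}$ in either case.

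The main obstacle is precisely the bookkeeping of the third paragraph: one must check that, when the single available power of $l$ and the gain of a negative power of $N$ are shared among all the nonlinear cross-terms, the It\^{o} corrections and the $h$-forcing, every term genuinely closes, with the cumulative exponent of $l$ staying $\leq1$ and the $N$-gain remaining a strictly positive power. This is where Assumption \ref{s3} is essential: in the focusing case $\sigma<2$ keeps the relevant Sobolev embeddings subcritical and preserves the coercivity (\ref{s4}) of $H$, without which these terms could not be absorbed into $l\,J$.
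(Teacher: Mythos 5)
Your proposal is correct and follows essentially the same route as the paper: derive the pathwise (noise-free, $h$-free) equation for $r=Q_{N}u_{1}-Q_{N}u_{2}$, apply It\^{o}'s formula to $J$ so that the damping yields $-2\alpha J$ and the skew term $-iA$ cancels against the averaged nonlinear coupling, then absorb the coefficient-derivative terms, the It\^{o} corrections (including those from the corrector $G_{1}(\sum_{i}H^{\sigma}(u_{i}))|r|_{2}^{2}$ in the focusing case), and the $h$-contributions into $\frac{\Lambda}{N^{1/4}}\,l\,J$ via the spectral gap $\|r\|_{3/4}\leq CN^{-1/4}\|r\|_{1}$ and the coercivity $J\geq\frac{1}{2}|\nabla r|_{2}^{2}$, concluding by the exponential weighting and taking expectations. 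The only details you gloss over, which the paper makes explicit, are the evolution of the focusing-case corrector term and the fact that the cross-variations $d\langle(W_{1},e_{p}),(W_{2},e_{q})\rangle$ are unknown (since the joint law of $(W_{1},W_{2})$ is arbitrary) and must be bounded in total variation by $dt$; neither affects the validity of your argument.
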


\begin{proof}
In light of (\ref{z8}), the difference of the two solutions $r=u_{1}-u_{2}=Q_{N}u_{1}-Q_{N}u_{2}$
satisfies the equation
\begin{align}\label{z4}
dr=-iArdt+i\lambda Q_{N}\left(|u_{1}|^{2\sigma}u_{1}-|u_{2}|^{2\sigma}u_{2}\right)dt-\alpha rdt.
\end{align}
Then using It\^o's formula to $|r|_{2}^{2}$, we compute
\begin{align*}
d|r|_{2}^{2}+2\alpha |r|_{2}^{2}dt=\left(2r,i\lambda\left(|u_{1}|^{2\sigma}u_{1}-|u|_{2}^{2\sigma}u_{2}\right)\right)dt.
\end{align*}
Since
\begin{align*}
\left||u_{1}|^{2\sigma}u_{1}-|u|_{2}^{2\sigma}u_{2}\right|\leq C\left(\sum\limits_{i=1}^{2}|u_{i}|^{2\sigma}\right)|r|,
\end{align*}
we find
\begin{align}\label{z3}
d|r|_{2}^{2}+2\alpha |r|_{2}^{2}dt \leq C\mathcal{R}\int_{0}^{1}\left(\sum\limits_{i=1}^{2}|u_{i}|^{2\sigma}\right)\left|r\right|^{2}dxdt
\leq C\left(\|u_{1}\|_{1}^{2\sigma}+\|u_{2}\|_{1}^{2\sigma}\right)|r|_{2}^{2}dt
\leq C\left(\sum\limits_{i=1}^{2} H^{\sigma}(u_{i})\right)|r|_{2}^{2}dt.
\end{align}
\textbf{The first case:} $\mathbf{\lambda=1}$.

As demonstrated in Lemma \ref{lemma3.1}, for $i=1,2$, we have
\begin{align*}
dH(u_{i})+\alpha H(u_{i})dt\leq \left(M^{i},bdW_{i}\right)+C_{1}dt+1_{i=1}\left(M^{i},h\right)dt,
\end{align*}
\begin{align*}
dH^{\sigma}(u_{i})\leq& -\frac{1}{2}\alpha \sigma H^{\sigma}(u_{i})dt+\sigma H^{\sigma-1}(u_{i})\left(M^{i},bdW_{i}\right)
+C_{\sigma}dt+\sigma H^{\sigma-1}(u_{i})1_{i=1}\left(M^{i},h\right)dt,
\end{align*}
where
\[
M^{i}=Au_{i}-|u_{i}|^{2\sigma}u_{i}+G\left(2+\frac{4\sigma}{2-\sigma}\right)|u_{i}|_{2}^{\frac{4\sigma}{2-\sigma}}u_{i}.
\]
Employing Sobolev's embedding inequality and H\"{o}lder's inequality, we deduce
\begin{align*}
\|M^{1}\|_{-1}\leq C\left(1+H(u_{1})\right)^{\frac{3\sigma+2}{2\sigma+4}}, \\
\left(M^{1},h\right)\leq C\left(1+\sum \limits_{i=1}^{2} H(u_{i})\right)^{2\sigma+2}.
\end{align*}
We set
\[
Z_{1}=\left(\sum \limits_{i=1}^{2}H^{\sigma}(u_{i})\right)\left|r\right|_{2}^{2}.
\]
In view of (\ref{z3}), we see
\begin{align*}
dZ_{1}=&\left( \sum \limits_{i=1}^{2}H^{\sigma}(u_{i})\right)d|r|_{2}^{2}+\left(\sum \limits_{i=1}^{2}dH^{\sigma}(u_{i})\right)|r|_{2}^{2} \\
\leq& -2\alpha Z_{1}dt+\sum \limits_{i=1}^{2}\sigma H^{\sigma-1}(u_{i})\left(M^{i},bdW_{i}\right)|r|_{2}^{2}+C\left(\sum\limits_{i=1}^{2}H^{\sigma}(u_{i})\right)^{2}|r|_{2}^{2}dt \\
&+C|r|_{2}^{2}dt+C\left(\sum\limits_{i=1}^{2}\sigma H^{\sigma-1}(u_{i})\right)\left(1+\sum\limits_{i=1}^{2}H(u_{i})\right)^{2\sigma+2}|r|_{2}^{2}dt.
\end{align*}
Hence
\begin{align}\label{z5}
dZ_{1}+2\alpha Z_{1}dt
\leq \sum \limits_{i=1}^{2}\sigma H^{\sigma-1}(u_{i})\left(M^{i},bdW_{i}\right)|r|_{2}^{2}+C\left(1+\sum \limits_{i=1}^{2}H^{3\sigma+1}(u_{i})\right)|r|_{2}^{2}dt.
\end{align}
We first set
\[
F(u)=\left|u\right|^{2\sigma}u
\]
and note that its derivatives
\begin{align*}
F^{'}(u)(v)=2\sigma|u|^{2\sigma-2}\mathcal{R}(\overline{u}v)u+|u|^{2\sigma}v=\left(\sigma+1\right)|u|^{2\sigma}v+\sigma|u|^{2\sigma-2}u^{2}\overline{v},
\end{align*}
\begin{align*}
F^{''}(u)(v,w)=
&\sigma(\sigma+1)|u|^{2\sigma-2}\overline{u}vw+\sigma(\sigma+1)|u|^{2\sigma-2}u\overline{v}w \\
&+\sigma(\sigma+1)|u|^{2\sigma-2}uv\overline{w}+\sigma(\sigma-1)|u|^{2\sigma-4}u^{3}\overline{vw},
\end{align*}
\begin{align*}
F^{'''}(u)(v,w,z)
=&\sigma(\sigma-1)(\sigma+1)|u|^{2\sigma-4}\overline{u}^{2}vwz+\sigma(\sigma-1)(\sigma+1)|u|^{2\sigma-4}u^{2}\overline{vw}z \\
&+\sigma(\sigma-1)(\sigma+1)|u|^{2\sigma-4}u^{2}\overline{vz}w+\sigma(\sigma-1)(\sigma+1)|u|^{2\sigma-4}u^{2}v\overline{wz} \\
&+\sigma^{2}(\sigma+1)|u|^{2\sigma-2}\overline{v}wz +\sigma^{2}(\sigma+1)|u|^{2\sigma-2}v\overline{w}z+\sigma^{2}(\sigma+1)|u|^{2\sigma-2}vw\overline{z} \\
&+\sigma(\sigma-1)(\sigma-2)|u|^{2\sigma-6}u^{4}\overline{vwz}.
\end{align*}
We can rewrite (\ref{z4}) in the following form
\begin{align}\label{z6}
dr+iArdt+\alpha rdt=iQ_{N}\int_{0}^{1}F^{'}\left(\tau u_{1}+(1-\tau)u_{2}\right)rd\tau dt.
\end{align}
Applying It\^o's formula to $J_{*}(u_{1},u_{2},r)$ yields
\begin{align*}
d&J_{*}(u_{1},u_{2},r) \\
=& -2\alpha J_{*}(u_{1},u_{2},r) \\
&-\mathcal{R}\int_{0}^{1}\int_{0}^{1}F^{''}\left(\tau u_{1}+(1-\tau)u_{2}\right)\tau \left(r,-\alpha u_{1}dt-iAu_{1}dt+i|u_{1}|^{2\sigma}u_{1}dt+bdW_{1}+hdt\right)d\tau \bar{r}dx \\
&-\mathcal{R}\int_{0}^{1}\int_{0}^{1}F^{''}(\tau u_{1}+(1-\tau)u_{2})(1-\tau) \left(r,-\alpha u_{2}dt-iAu_{2}dt+i|u_{2}|^{2\sigma}u_{2}dt+bdW_{2}\right)d\tau \bar{r}dx \\
&-\frac{1}{2}\sum\limits_{p=1}^{\infty}b_{p}^{2} \mathcal{R}\int_{0}^{1}\int_{0}^{1}F^{'''}\left(\tau u_{1}+(1-\tau)u_{2}\right)\left(r,\tau e_{p},\tau e_{p}\right)d\tau \bar{r}dx dt\\
&-\frac{1}{2}\sum\limits_{p,q=1}^{\infty}b_{p}b_{q} \mathcal{R}\int_{0}^{1}\int_{0}^{1}F^{'''}\left(\tau u_{1}+(1-\tau)u_{2}\right)\left(r,\tau e_{p},(1-\tau)e_{q}\right)d\tau \bar{r}dxd\left\langle\left(W_{1},e_{p}\right),\left(W_{2},e_{q}\right)\right\rangle \\
&-\frac{1}{2}\sum\limits_{p,q=1}^{\infty}b_{p}b_{q} \mathcal{R}\int_{0}^{1}\int_{0}^{1}F^{'''}\left(\tau u_{1}+(1-\tau)u_{2}\right)\left(r,(1-\tau) e_{q},\tau e_{p}\right)d\tau \bar{r}dxd\left\langle\left(W_{2},e_{q}\right),\left(W_{1},e_{p}\right)\right\rangle \\
&-\frac{1}{2}\sum\limits_{q=1}^{\infty}b_{q}^{2} \mathcal{R}\int_{0}^{1}\int_{0}^{1}F^{'''}\left(\tau u_{1}+(1-\tau)u_{2}\right)\left(r,(1-\tau) e_{q},(1-\tau) e_{q}\right)d\tau \bar{r}dxdt \\
:=&-2\alpha J_{*}(u_{1},u_{2},r)dt+\Rmnum{1}+\Rmnum{2}+\Rmnum{3}+\Rmnum{4}+\Rmnum{5}+\Rmnum{6}.
\end{align*}
Using H\"{o}lder's inequality and Sobolev's embedding inequality, we deduce
\begin{align*}
\Rmnum{1}=&-\mathcal{R}\int_{0}^{1}\int_{0}^{1}F^{''}\left(\tau u_{1}+(1-\tau)u_{2}\right)\tau \left(r,-\alpha u_{1}dt-iAu_{1}dt+i|u_{1}|^{2\sigma}u_{1}dt\right)d\tau \bar{r}dx \\
&-\mathcal{R}\int_{0}^{1}\int_{0}^{1}F^{''}\left(\tau u_{1}+(1-\tau)u_{2}\right)\left(r,\tau bdW_{1}\right)d\tau \bar{r}dx \\
&-\mathcal{R}\int_{0}^{1}\int_{0}^{1}F^{''}\left(\tau u_{1}+(1-\tau)u_{2}\right)\left(r,\tau hdt\right)d\tau \bar{r}dx \\
\leq& -\mathcal{R}\int_{0}^{1}\int_{0}^{1}F^{''}\left(\tau u_{1}+(1-\tau)u_{2}\right)\left(r,\tau bdW_{1}\right)d\tau \bar{r}dx+C\left(1+\sum \limits_{i=1}^{2}H^{3\sigma+1}(u_{i})\right)\|r\|_{1}\|r\|_{\frac{3}{4}}dt.
\end{align*}
Similarly
\begin{align*}
&\Rmnum{2} \\
\leq& -\mathcal{R}\int_{0}^{1}\int_{0}^{1}F^{''}\left(\tau u_{1}+(1-\tau)u_{2}\right)\left(r, (1-\tau)bdW_{2}\right)d\tau \bar{r}dx+C\left(1+\sum \limits_{i=1}^{2}H^{3\sigma+1}(u_{i})\right)\|r\|_{1}\|r\|_{\frac{3}{4}}dt.
\end{align*}
Since $|e_{n}|_{\infty}=1$, we have
\[
\Rmnum{3}\leq CB_{0}\left(1+\sum \limits_{i=1}^{2}H^{2\sigma}(u_{i})\right)|r|_{2}^{2}dt.
\]
Note that we have no information on the law of the couple $\left( W_{1},W_{2}\right)$. Hence, we cannot compute $d\left\langle(W_{1},e_{p}),(W_{2},e_{q})\right\rangle$. However, we know that
\[
d\left|\left\langle\left(W_{1},e_{p}\right),\left(W_{2},e_{q}\right)\right\rangle\right|\leq dt.
\]
It follows from Schwartz's inequality that
\[
\left(\sum \limits_{n=1}^{\infty}b_{n}\right)^{2}\leq \left(\sum \limits_{n=1}^{\infty}\mu_{n}b_{n}^{2}\right)\left(\sum \limits_{n=1}^{\infty}\frac{1}{\mu_{n}}\right)\leq CB_{1}.
\]
Hence
\[
\Rmnum{4}\leq CB_{1}\left(1+\sum \limits_{i=1}^{2}H^{2\sigma}(u_{i})\right)|r|_{2}^{2}dt.
\]
Likewise,
\begin{align*}
\Rmnum{5}\leq CB_{1}\left(1+\sum \limits_{i=1}^{2}H^{2\sigma}(u_{i})\right)|r|_{2}^{2}dt, \quad
\Rmnum{6}\leq CB_{0}\left(1+\sum \limits_{i=1}^{2}H^{2\sigma}(u_{i})\right)|r|_{2}^{2}dt.
\end{align*}
We combine these estimates to compute
\begin{align}\label{j1}
d&J_{*}(u_{1},u_{2},r)+2\alpha J_{*}(u_{1},u_{2},r)dt \nonumber \\
\leq& C\left(1+\sum \limits_{i=1}^{2}H^{3\sigma+1}(u_{i})\right)\|r\|_{1}\|r\|_{\frac{3}{4}}dt
-\mathcal{R}\int_{0}^{1}\int_{0}^{1}F^{''}\left(\tau u_{1}+(1-\tau)u_{2}\right)\left(r,\tau bdW_{1}\right)d\tau \bar{r}dx \\ \nonumber
&-\mathcal{R}\int_{0}^{1}\int_{0}^{1}F^{''}\left(\tau u_{1}+(1-\tau)u_{2}\right)\left(r,(1-\tau) bdW_{2}\right)d\tau \bar{r}dx.
\end{align}
Combining (\ref{z5}) and (\ref{j1}) leads us to the estimate
\begin{align*}
d&J(u_{1},u_{2},r)+2\alpha J(u_{1},u_{2},r)dt \\
\leq& C\left(1+\sum \limits_{i=1}^{2}H^{3\sigma+1}(u_{i})\right)\|r\|_{1}\|r\|_{\frac{3}{4}}dt
-\mathcal{R}\int_{0}^{1}\int_{0}^{1}F^{''}\left(\tau u_{1}+(1-\tau)u_{2}\right)\left(r,\tau bdW_{1}\right)d\tau \bar{r}dx \\
&-\mathcal{R}\int_{0}^{1}\int_{0}^{1}F^{''}\left(\tau u_{1}+(1-\tau)u_{2}\right)\left(r,(1-\tau) bdW_{2}\right)d\tau \bar{r}dx+\sum \limits_{i=1}^{2}\sigma G_{1} H^{\sigma-1}(u_{i})\left(M^{i},bdW_{i}\right)|r|_{2}^{2}.
\end{align*}
Since $\|r\|_{\frac{3}{4}}\leq CN^{-\frac{1}{4}}\|r\|_{1}$, there exists $\Lambda>0$ such that
\begin{align}\label{z7}
d&J(u_{1},u_{2},r)+\left(2\alpha-\frac{\Lambda}{N^{\frac{1}{4}}}l\left(u_{1}(t),u_{2}(t)\right)\right)J\left(u_{1},u_{2},r\right)dt \\ \nonumber
\leq& -\mathcal{R}\int_{0}^{1}\int_{0}^{1}F^{''}\left(\tau u_{1}+(1-\tau)u_{2}\right)\left(r,\tau bdW_{1}\right)d\tau \bar{r}dx \\ \nonumber
&-\mathcal{R}\int_{0}^{1}\int_{0}^{1}F^{''}\left(\tau u_{1}+(1-\tau)u_{2}\right)\left(r,(1-\tau) bdW_{2}\right)d\tau \bar{r}dx+\sum \limits_{i=1}^{2}\sigma G_{1} H^{\sigma-1}(u_{i})\left(M^{i},bdW_{i}\right)|r|_{2}^{2} \\ \nonumber
:=&dM(t).
\end{align}
Multiplying (\ref{z7}) by $\exp\left(2\alpha t-\frac{\Lambda}{N^{\frac{1}{4}}}\int_{0}^{t}l\left(u_{1}(s),u_{2}(s)\right)ds\right)$ and integrating from $0$ to $t\wedge\tau$, we see
\[
J_{FP}^{N}(u_{1},u_{2},r)(t\wedge \tau)\leq J_{FP}^{N}(u_{1},u_{2},r)(0)+\int_{0}^{t\wedge\tau}\exp\left(2\alpha s-\frac{\Lambda}{N^{\frac{1}{4}}}\int_{0}^{s}l\left(u_{1}(s^{'}),u_{2}(s^{'})\right)ds^{'}\right)dM(s).
\]
Take the expectation to conclude
\[
\mathbb{E}\left[J_{FP}^{N}(u_{1},u_{2},r)(t\wedge \tau)\right]\leq J\left(u_{0}^{1},u_{0}^{2},r_{0}\right).
\]
\\
\textbf{The second case:} $\mathbf{\lambda=-1}$.

It is not difficult to see that \[
J(u_{1},u_{2},r)=|\nabla r|_{2}^{2}+\mathcal{R}\int_{0}^{1}\int_{0}^{1}F^{'}\left(\tau u_{1}+(1-\tau)u_{2}\right)rd\tau\bar{r}dx\geq|\nabla r|_{2}^{2}.
\]
Utilizing It\^o's formula to $J(u_{1},u_{2},r)$, we see
\begin{align*}
d&J(u_{1},u_{2},r) \\
=& -2\alpha J(u_{1},u_{2},r) \\
&+\mathcal{R}\int_{0}^{1}\int_{0}^{1}F^{''}\left(\tau u_{1}+(1-\tau)u_{2}\right)\tau \left(r,-\alpha u_{1}dt-iAu_{1}dt-i|u_{1}|^{2\sigma}u_{1}dt+bdW_{1}+hdt\right)d\tau \bar{r}dx \\
&+\mathcal{R}\int_{0}^{1}\int_{0}^{1}F^{''}\left(\tau u_{1}+(1-\tau)u_{2}\right)\left(1-\tau\right) \left(r,-\alpha u_{2}dt-iAu_{2}dt-i|u_{2}|^{2\sigma}u_{2}dt+bdW_{2}\right)d\tau \bar{r}dx \\
&+\frac{1}{2}\sum\limits_{p=1}^{\infty}b_{p}^{2} \mathcal{R}\int_{0}^{1}\int_{0}^{1}F^{'''}\left(\tau u_{1}+(1-\tau)u_{2}\right)\left(r,\tau e_{p},\tau e_{p}\right)d\tau \bar{r}dx dt\\
&+\frac{1}{2}\sum\limits_{p,q=1}^{\infty}b_{p}b_{q} \mathcal{R}\int_{0}^{1}\int_{0}^{1}F^{'''}\left(\tau u_{1}+(1-\tau)u_{2}\right)\left(r,\tau e_{p},(1-\tau)e_{q}\right)d\tau \bar{r}dxd\left\langle\left(W_{1},e_{p}\right),\left(W_{2},e_{q}\right)\right\rangle \\
&+\frac{1}{2}\sum\limits_{p,q=1}^{\infty}b_{p}b_{q} \mathcal{R}\int_{0}^{1}\int_{0}^{1}F^{'''}\left(\tau u_{1}+(1-\tau)u_{2}\right)\left(r,(1-\tau) e_{q},\tau e_{p}\right)d\tau \bar{r}dxd\left\langle\left(W_{2},e_{q}\right),\left(W_{1},e_{p}\right)\right\rangle \\
&+\frac{1}{2}\sum\limits_{q=1}^{\infty}b_{q}^{2} \mathcal{R}\int_{0}^{1}\int_{0}^{1}F^{'''}\left(\tau u_{1}+(1-\tau)u_{2}\right)\left(r,(1-\tau) e_{q},(1-\tau) e_{q}\right)d\tau \bar{r}dxdt \\
:=&-2\alpha J(u_{1},u_{2},r)dt+\Rmnum{1}+\Rmnum{2}+\Rmnum{3}+\Rmnum{4}+\Rmnum{5}+\Rmnum{6}.
\end{align*}
As in our proof of the first case, we similarly conclude
\[
\mathbb{E}\left[J_{FP}^{N}(u_{1},u_{2},r)(t\wedge \tau)\right]\leq J\left(u_{0}^{1},u_{0}^{2},r_{0}\right).
\]
\end{proof}

\begin{cor}\label{corollary3.7}
Assume that for any $B$, $d_{0}$, $\kappa_{0}>0$, there exist $N_{1}(B, \kappa_{0})$ and $C^{*}(d_{0})$ such that under the assumptions of Lemma \ref{proposition3.6}, (\ref{z8}) and (\ref{z9}) hold with $N\geq N_{1}$, and for some $\rho> 0$,
\begin{align}\label{z10}
E_{u_{i},3\sigma+1}(t)\leq \rho +1+d_{0}^{3\sigma+1}+d_{0}^{6\sigma+2}+Bt \quad \quad on \ [0,\tau] \quad \quad for \ i=1,2,
\end{align}
then for any $u_{0}^{1}$, $u_{0}^{2}$ such that $\sum_{i=1}^{2}H(u_{0}^{i})\leq d_{0}$ and for any $a\in \mathbb{R}$,
\begin{align*}
\mathbb{P}\left(\|r(T)\|_{1}>C^{*}\left(d_{0}\right)\exp\left(a-\frac{\alpha}{4}T+\rho\right) \ and \ T\leq\tau\right)\leq \exp\left(-a-\frac{\alpha}{4}T\right).
\end{align*}
Furthermore, there exists a constant $C>0$ such that
\[
C^{*}(d_{0})\leq Cd_{0}\exp\left(Cd_{0}^{6\sigma+2}\right).
\]
\end{cor}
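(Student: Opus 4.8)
The plan is to feed the Foias--Prodi bound of Lemma \ref{proposition3.6} into a Chebyshev inequality, after first using hypothesis (\ref{z10}) to show that the exponential weight defining $J_{FP}^{N}$ is, up to a constant controllable by $d_{0}$, at least $e^{\alpha t}$ on $[0,\tau]$. The starting point is Lemma \ref{proposition3.6}, which (with $k_{0}=\kappa_{0}$, whence $\Lambda=\Lambda(\kappa_{0})$ is fixed) gives $\mathbb{E}\left[J_{FP}^{N}(u_{1},u_{2},r)(T\wedge\tau)\right]\leq J(u_{0}^{1},u_{0}^{2},r_{0})=:J_{0}$ for $N\geq N_{1}$.

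Next I would control the weight. Since $E_{u_{i},3\sigma+1}(t)=H^{3\sigma+1}(u_{i}(t))+\tfrac{\alpha(3\sigma+1)}{2}\int_{0}^{t}H^{3\sigma+1}(u_{i})\,ds$ and $H\geq0$, hypothesis (\ref{z10}) forces $\int_{0}^{t}H^{3\sigma+1}(u_{i})\,ds\leq\tfrac{2}{\alpha(3\sigma+1)}(\rho+1+d_{0}^{3\sigma+1}+d_{0}^{6\sigma+2}+Bt)$ on $[0,\tau]$. Summing over $i=1,2$ and adding the constant term of $l$, with $c:=\tfrac{4}{\alpha(3\sigma+1)}$ this yields $\int_{0}^{t}l(u_{1},u_{2})\,ds\leq(1+cB)t+c(\rho+1+d_{0}^{3\sigma+1}+d_{0}^{6\sigma+2})$ on $[0,\tau]$. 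Writing $E(t):=2\alpha t-\tfrac{\Lambda}{N^{1/4}}\int_{0}^{t}l\,ds$ and choosing $N_{1}(B,\kappa_{0})$ so large that $\tfrac{\Lambda(1+cB)}{N^{1/4}}\leq\alpha$ and $\tfrac{\Lambda c}{N^{1/4}}\leq2$ for $N\geq N_{1}$, I obtain $E(t)\geq\alpha t-K$ on $[0,\tau]$, where $K:=\tfrac{\Lambda c}{N^{1/4}}(\rho+1+d_{0}^{3\sigma+1}+d_{0}^{6\sigma+2})$ and, crucially, $K/2-\rho\leq 1+d_{0}^{3\sigma+1}+d_{0}^{6\sigma+2}$ is independent of $\rho$.

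Then comes the probabilistic step. Concavity of the square root and Step~1 give $\mathbb{E}\sqrt{J_{FP}^{N}(T\wedge\tau)}\leq\sqrt{J_{0}}$ by Jensen. On $\{T\leq\tau\}$ one has $T\wedge\tau=T$, and using $J\geq\tfrac12\|r\|_{1}^{2}$ together with Step~2, $\sqrt{J_{FP}^{N}(T)}\geq\tfrac{1}{\sqrt2}e^{E(T)/2}\|r(T)\|_{1}\geq\tfrac{1}{\sqrt2}e^{(\alpha T-K)/2}\|r(T)\|_{1}$. Hence $\{\|r(T)\|_{1}>\lambda\}\cap\{T\leq\tau\}\subset\{\sqrt{J_{FP}^{N}(T\wedge\tau)}>\tfrac{\lambda}{\sqrt2}e^{(\alpha T-K)/2}\}$, and Markov's inequality gives $\mathbb{P}(\|r(T)\|_{1}>\lambda,\,T\leq\tau)\leq\tfrac{\sqrt2\sqrt{J_{0}}}{\lambda}e^{-(\alpha T-K)/2}$. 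Putting $\lambda=C^{*}(d_{0})e^{a-\frac{\alpha}{4}T+\rho}$ turns the right-hand side into exactly $\tfrac{\sqrt2\sqrt{J_{0}}}{C^{*}(d_{0})}e^{K/2-\rho}\,e^{-a-\frac{\alpha}{4}T}$, so the claim follows once $C^{*}(d_{0})\geq\sqrt2\sqrt{J_{0}}\,e^{K/2-\rho}$. I stress that it is the square root (via Jensen) that makes both the $a$- and the $T$-dependence match the target for every $a\in\mathbb{R}$: a Chebyshev applied to $J_{FP}^{N}$ itself would produce $e^{-2a}$ and would only cover $a\geq0$.

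Finally, for the explicit bound on $C^{*}$ I would use $H(u)\geq c\|u\|_{1}^{2}$ (from (\ref{s4})) to get $\|r_{0}\|_{1}^{2}\leq C(H(u_{0}^{1})+H(u_{0}^{2}))\leq Cd_{0}$ and, via the Sobolev bound appearing in the definition of $J$, $J_{0}\leq Cd_{0}(1+d_{0}^{\sigma})$; combined with $e^{K/2-\rho}\leq e^{1+d_{0}^{3\sigma+1}+d_{0}^{6\sigma+2}}$ from Step~2, this gives $C^{*}(d_{0})\leq Cd_{0}\exp(Cd_{0}^{6\sigma+2})$ after absorbing the remaining subexponential powers of $d_{0}$ into the exponential. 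The main obstacle is Step~2: the weight $\int_{0}^{t}l\,ds$ grows linearly in $t$ through the $Bt$ term of (\ref{z10}), so the negative exponent could in principle overwhelm $2\alpha t$; one must exploit the prefactor $N^{-1/4}$ to push the coefficient $(1+cB)$ below $\alpha$, which is precisely why $N_{1}$ is allowed to depend on both $B$ and $\kappa_{0}$ (the latter entering through $\Lambda$). Securing the $\rho$-independent constant $K/2-\rho$ rests on the same smallness, and the only other place requiring care is the bookkeeping of the $d_{0}$-powers in the final estimate.
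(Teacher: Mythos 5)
Your proposal follows the same route as the paper, whose entire proof of Corollary \ref{corollary3.7} is the single sentence ``By Lemma \ref{proposition3.6} and Chebyshev's inequality, Corollary \ref{corollary3.7} can be verified,'' and your reconstruction of the probability estimate is correct and complete: hypothesis (\ref{z10}) together with $H\geq 0$ gives the linear-in-$t$ control of $\int_0^t l\,ds$; choosing $N_1(B,\kappa_0)$ so that $\Lambda(1+cB)N^{-1/4}\leq\alpha$ and $\Lambda c N^{-1/4}\leq 2$ makes the exponential weight at least $e^{\alpha t-K}$ with $K/2-\rho$ bounded independently of $\rho$ (the key point, since $C^{*}$ may depend only on $d_0$); and Markov's inequality for $\sqrt{J_{FP}^{N}(T\wedge\tau)}$, via Jensen, yields exactly the claimed tail. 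Your parenthetical claim that plain Chebyshev on $J_{FP}^{N}$ ``would only cover $a\geq 0$'' is slightly off --- for $a+\frac{\alpha}{4}T<0$ the asserted bound exceeds $1$ and is trivial, so the quadratic Chebyshev also suffices --- but this is inessential to your argument.

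The one genuine defect is the final absorption step. What you actually prove is $C^{*}(d_0)\leq C\sqrt{d_0}\left(1+d_0^{\sigma/2}\right)\exp\left(Cd_0^{6\sigma+2}\right)$, since $C^{*}$ must dominate $\sqrt{2J_0}\,e^{K/2-\rho}$ with $J_0\leq Cd_0(1+d_0^{\sigma})$; this does \emph{not} imply $C^{*}(d_0)\leq Cd_0\exp\left(Cd_0^{6\sigma+2}\right)$, because $\sqrt{d_0}\big/\left(d_0\exp\left(Cd_0^{6\sigma+2}\right)\right)=d_0^{-1/2}e^{-Cd_0^{6\sigma+2}}\to\infty$ as $d_0\to 0$. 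So ``absorbing the remaining subexponential powers of $d_0$ into the exponential'' is only valid for $d_0$ bounded away from zero (e.g.\ $d_0\geq 1$). Note that this $\sqrt{d_0}$ scaling is unavoidable by any Chebyshev-type argument: the event concerns $\|r(T)\|_{1}$ while $J$ controls $\|r(T)\|_{1}^{2}$, and indeed taking $u_0^2=0$, $H(u_0^1)=d_0$ and $T$, $\rho$ small shows that any admissible $C^{*}(d_0)$ must be at least of order $\sqrt{d_0}$. The linear-in-$d_0$ form of the paper's final claim appears to be carried over from Lemma \ref{lemma3.8}, where the controlled quantity $\int l\|r\|_{1}^{2}\,ds$ is quadratic in $r$ and $C^{**}\propto J_0\leq Cd_0$ is natural; what your argument (and, as far as one can tell, the paper's one-line argument) actually delivers is the corollary with $\sqrt{d_0}$ in place of $d_0$ in the bound on $C^{*}$.
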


\begin{proof}
By Lemma \ref{proposition3.6} and Chebyshev's inequality, Corollary \ref{corollary3.7} can be verified.
\end{proof}

\begin{lem}\label{lemma3.8}
Assume that for any $B$, $d_{0}$, $\kappa_{0}>0$ and any $a\in\mathds{R}$, there exist $N_{2}(B, \kappa_{0}, a)$ and $C^{**}(d_{0},B)$ such that under the assumptions of Lemma \ref{proposition3.6}, (\ref{z8}) and (\ref{z9}) hold with $N\geq N_{2}$ and (\ref{z10}) holds for some $\rho> 0$, we obtain that for any T,
\[
\mathbb{P}\left(\int_{T}^{\tau}l\left(u_{1}(s),u_{2}(s)\right)\|r(s)\|_{1}^{2}ds>C^{**}(d_{0},B)\exp\left(a-\frac{\alpha}{2}T+\rho\right) \ and \ T\leq\tau\right)\leq \exp\left(-a-\frac{\alpha}{2}T\right),
\]
provided $\sum_{i=1}^{2}H(u_{0}^{i})\leq d_{0}$ holds. Furthermore, there exists a constant $C>0$ such that
\begin{align}\label{b2}
C^{**}(d_{0},B)\leq C(B)d_{0}\exp\left(Cd_{0}^{6\sigma+2}\right).
\end{align}
\end{lem}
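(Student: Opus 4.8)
The plan is to run the same argument as in Corollary~\ref{corollary3.7}, but to integrate the Foias--Prodi bound of Lemma~\ref{proposition3.6} in time instead of evaluating it at one instant. Write $\tilde G(s)=\exp\!\big(2\alpha s-\tfrac{\Lambda}{N^{1/4}}\int_0^s l(u_1,u_2)\,d\theta\big)$, so that $J_{FP}^N(u_1,u_2,r)(s)=\tilde G(s)\,J(u_1,u_2,r)(s)$, and recall $J(u_1,u_2,r)\ge\tfrac12\|r\|_1^2$. Then for every $s\ge0$,
\[
\mathbb{E}\big[\tilde G(s)\,\|r(s)\|_1^2\,\mathds{1}_{\{s\le\tau\}}\big]\le 2\,\mathbb{E}\big[J_{FP}^N(u_1,u_2,r)(s\wedge\tau)\big]\le 2\,J(u_0^1,u_0^2,r_0),
\]
the last inequality being exactly (\ref{b1}).

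Next I would use hypothesis (\ref{z10}) to make this random weight deterministically large on $[0,\tau]$. Since $E_{u_i,3\sigma+1}$ dominates both $H^{3\sigma+1}(u_i)$ and $\tfrac{\alpha}{2}(3\sigma+1)\int_0^sH^{3\sigma+1}(u_i)$, on $\{s\le\tau\}$ one gets a pointwise bound $l(u_1(s),u_2(s))\le L(s)$ with $L$ affine in $s$ and linear in $\rho$, together with $\int_0^s l\le s+C(\rho+1+d_0^{3\sigma+1}+d_0^{6\sigma+2}+Bs)$. Choosing $N\ge N_2$ large enough that $\tfrac{\Lambda}{N^{1/4}}(1+CB)\le\tfrac{\alpha}{2}$ and $\tfrac{\Lambda}{N^{1/4}}C\le\tfrac12$, this gives $\tilde G(s)\ge\exp(\tfrac{3\alpha}{2}s-C_\rho)$ on $\{s\le\tau\}$ with $C_\rho\le\tfrac12(\rho+1+d_0^{3\sigma+1}+d_0^{6\sigma+2})$, and hence the pointwise-in-time decay
\[
\mathbb{E}\big[\|r(s)\|_1^2\,\mathds{1}_{\{s\le\tau\}}\big]\le 2\,J(u_0^1,u_0^2,r_0)\,e^{-\frac{3\alpha}{2}s+C_\rho}.
\]
I would then combine the two displays via $l(s)\le L(s)$ on $\{s\le\tau\}$ and integrate:
\[
\mathbb{E}\int_T^\tau l\,\|r\|_1^2\,ds=\int_T^\infty \mathbb{E}\big[l(s)\|r(s)\|_1^2\mathds{1}_{\{s\le\tau\}}\big]\,ds\le 2\,J(u_0^1,u_0^2,r_0)\,e^{C_\rho}\int_T^\infty L(s)\,e^{-\frac{3\alpha}{2}s}\,ds,
\]
and the elementary integral is bounded by $C(1+T)(a_0+a_1)\,e^{-\frac{3\alpha}{2}T}$, where $a_0,a_1$ are the coefficients of $L$. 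A Markov inequality against the threshold $C^{**}e^{a-\frac{\alpha}{2}T+\rho}$ produces the factor $e^{C_\rho-\rho}\le e^{-\rho/2}e^{\frac12(1+d_0^{3\sigma+1}+d_0^{6\sigma+2})}$, whose $e^{-\rho/2}$ absorbs the linear-in-$\rho$ growth of $a_0$, and the factor $e^{-\frac{3\alpha}{2}T}\cdot e^{\frac{\alpha}{2}T}=e^{-\alpha T}$, which with $(1+T)\le C_\alpha e^{\frac{\alpha}{2}T}$ leaves precisely $e^{-a-\frac{\alpha}{2}T}$. Reading off the admissible constant and using $J(u_0^1,u_0^2,r_0)\le Cd_0(1+d_0^{\sigma})$ while the surviving coefficients are affine in $B$ yields (\ref{b2}). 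The case $\lambda=-1$ is identical, using the corresponding form of Lemma~\ref{proposition3.6}.

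The delicate point, and the main obstacle, is the interplay of the competing effects packed into $\int_T^\tau l\|r\|_1^2\,ds$: the weight $l$ grows polynomially in time and linearly in $\rho$ and is controlled only through (\ref{z10}), whereas $\|r\|_1^2$ decays only on average through Foias--Prodi. One must arrange that the decay rate of the weight strictly exceeds $\alpha$ (here $\tfrac{3\alpha}{2}$), so that after paying the $e^{-\frac{\alpha}{2}T}$ demanded by the threshold a genuine surplus $e^{-\alpha T}$ survives to dominate the polynomial prefactor, and simultaneously keep $C_\rho$ small compared with $\rho$ so that the $e^{\rho}$ slack in the threshold cancels the $\rho$-growth of $\sup l$; this is exactly what forces $N_2$ to depend on $B,\kappa_0,a$ yet lets $C^{**}$ depend only on $d_0,B$. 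A tempting alternative, absorbing $l\|r\|_1^2$ into the exponential weight and extracting an integrated dissipation term, must be avoided, since the small coefficient $N^{-1/4}$ in front of that dissipation would inject a spurious factor $N^{1/4}$ into $C^{**}$.
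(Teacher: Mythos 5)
Your proposal is correct and takes essentially the same route as the paper: both turn the Foias--Prodi estimate (\ref{b1}) of Lemma \ref{proposition3.6} into a time-integrated decay bound for $\int_T^\tau l\left(u_1,u_2\right)\|r\|_1^2\,ds$, using (\ref{z10}) to control the exponential weight (and the factor $l$) for $N\geq N_2$ large, and then conclude by Markov's inequality together with $J\left(u_0^1,u_0^2,r_0\right)\leq Cd_0\left(1+d_0^{\sigma}\right)$ to get (\ref{b2}). If anything, your execution is tighter than the paper's: you bound $l$ deterministically by an affine function of $s$ and integrate the pointwise-in-time decay via Tonelli, whereas the paper bounds $l\leq C\exp\left(\delta\sum_{i}E_{u_i,3\sigma+1}\right)$ and integrates (\ref{b1}) directly over the random interval $[T,\tau]$.
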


\begin{proof}
Integrate (\ref{b1}) with respect to t:
\begin{align*}
\int_{T}^{\tau}J(u_{0}^{1},u_{0}^{2},r_{0})dt&\geq\int_{T}^{\tau}\mathbb{E}[J_{FP}^{N}(u_{1},u_{2},r)(t)]dt \\
&\geq\frac{1}{2}\int_{T}^{\tau}\mathbb{E}[\exp(2\alpha t-\frac{\Lambda}{N^{\frac{1}{4}}}t-C\frac{\Lambda}{N^{\frac{1}{4}}}(\rho+1+d_{0}^{3\sigma+1}+d_{0}^{6\sigma+2}+Bt))|\nabla r|_{2}^{2}]dt.
\end{align*}
So \\
\begin{align*}
\int_{T}^{\tau}\mathbb{E}[|\nabla r|_{2}^{2}dt]\leq 2\exp(-2\alpha T+\frac{\Lambda}{N^{\frac{1}{4}}}\tau+C\frac{\Lambda}{N^{\frac{1}{4}}}(\rho+1+d_{0}^{3\sigma+1}+d_{0}^{6\sigma+2}+B\tau))J(u_{0}^{1},u_{0}^{2},r_{0})(\tau-T).
\end{align*}
Since for any $x>0$, $1+x\leq C_{\delta}\exp{(\delta x)}$, we have
\begin{align*}
\mathbb{P}&(\int_{T}^{\tau}l(u_{1}(s),u_{2}(s))|\nabla r(s)|_{2}^{2}ds>C^{**}(d_{0},B)\exp(a-\frac{\alpha}{2}T+\rho)) \\
&\leq \frac{\mathbb{E}\int_{T}^{\tau}l(u_{1}(s),u_{2}(s))|\nabla r(s)|_{2}^{2}ds}{C^{**}(d_{0},B)\exp(a-\frac{\alpha}{2}T+\rho)} \\
&\leq \frac{\mathbb{E}\int_{T}^{\tau}C\exp{(\sum\limits_{i=1}^{2}E_{u_{i},3\sigma+1})}|\nabla r(s)|_{2}^{2}ds}{C^{**}(d_{0},B)\exp(a-\frac{\alpha}{2}T+\rho)} \\
&\leq \exp\left(-a-\frac{\alpha}{2}T\right),
\end{align*}
where we choose $d_{0}$ so small that there exist $N_{2}(B, \kappa_{0}, a)$ and $C^{**}(d_{0},B)$ such that the last inequality holds. Since
\[
J(u_{0}^{1},u_{0}^{2},r_{0})=|\nabla r(0)|_{2}^{2}+\mathcal{R}\int_{0}^{1}\int_{0}^{1}F^{'}(\tau u_{1}(0)+(1-\tau)u_{2}(0))r(0)d\tau \bar{r}(0)dx\leq Cd_{0},
\]
(\ref{b2}) follows.
\end{proof}

\subsection{Proof of Theorem \ref{a8}}

We choose $N_{0}=\max\left(N_{1}, N_{2}\right)$ and $N_{*}\geq N_{0}$.

\begin{proof}
Since the proof is similar to that of Theorem 2.10 in \cite{DO}, we outline the proof in this section for completeness. In order to prove this theorem, we will construct a coupling. The construction of the coupling is by induction. We first set
$u_{i}(0)=u_{0}^{i}, \ W_{i}(0)=0, \ i=1,2.$
Assuming that we have built $\left(u_{i},W_{i}\right)_{i=1,2}$ on $[0,kT]$, then we construct a probability space $(\Omega_{0},\mathcal{F}_{0},\mathbb{P}_{0})$ and two pairs of functions $\left(V_{i}^{a}\right)_{i=1,2}$ and $\left(V_{i}^{b}\right)_{i=1,2}$ satisfying (H3)-(H4) and independent of $\left(u_{i},W_{i}\right)_{i=1,2}$ on $[0,kT]$ and set for any $t\in [0,T]$, $i=1,2$,
\begin{equation*}
\left(u_{i}(kT+t),W_{i}(kT+t)\right)=
\begin{cases}
V_{i}^{a}\left(t,u_{1}(kT),u_{2}(kT)\right) \quad {\rm if} \ l_{0}(k)=\infty \ {\rm and} \ H\left(u_{1}(kT)\right)+H\left(u_{2}(kT)\right)\leq R_{0}, \\
V_{i}^{b}\left(t,u_{1}(kT),u_{2}(kT)\right) \quad {\rm if} \ l_{0}(k)\leq k, \\
V_{i}^{0}\left(t,u_{1}(kT),u_{2}(kT)\right) \quad {\rm if} \ l_{0}(k)=\infty \ {\rm and} \ H(u_{1}(kT))+H(u_{2}(kT))> R_{0},
\end{cases}
\end{equation*}
where $V_{i}^{0}\left(t,u_{1}(kT),u_{2}(kT)\right)$ is a trivial coupling. We take a cylindrical Wiener process W independent of $(u_{i},W_{i})_{i=1,2}$ on $[0,kT]$ and set $V_{i}^{0}(t,u_{1}(kT),u_{2}(kT))=(u(t,u_{i}(kT)),W)$. Indeed, the choice of the coupling is not important in the three case.

It suffices to verify
\begin{align}\label{zhengming1}
\mathbb{P}\left(\left|u_{1}(t)-u_{2}(t)\right|>C(1+t)^{-q}\right)\leq C(1+t)^{-q}\left(1+H(u_{0}^{1})+H(u_{0}^{2})\right).
\end{align}
We suppose $\varphi$ is a Lipschitz and bounded function, then we have
\begin{align*}
\left|\mathbb{E}\varphi\left(u\left(t,u_{0}^{1}\right)\right)-\mathbb{E}\varphi\left(u\left(t,u_{0}^{2}\right)\right)\right|&=\left|\mathbb{E}\varphi(u_{1}(t))-\mathbb{E}\varphi(u_{2}(t))\right| \\
&\leq 2|\varphi|_{\infty}\mathbb{P}\left(|u_{1}(t)-u_{2}(t)|>C(1+t)^{-q}\right)+CL_{\varphi}\left(1+t\right)^{-q}.
\end{align*}
By (\ref{zhengming1}), we find
\begin{align*}
\left|\mathbb{E}\varphi\left(u\left(t,u_{0}^{1}\right)\right)-\mathbb{E}\varphi\left(u\left(t,u_{0}^{2}\right)\right)\right|\leq C\|\varphi\|_{L}\left(1+H\left(u_{0}^{1}\right)+H\left(u_{0}^{2}\right)\right)(1+t)^{-q},
\end{align*}
which implies that Theorem \ref{a8} holds. In order to prove (\ref{zhengming1}), we show that (H1)-(H5) are true. Specifically, (H1) can be easily proved by the definition of $l_{0}$, (H2) can be obtained by Lemma \ref{proposition3.6} and Corollary \ref{corollary3.7}, and (H5) is the so-called Lyapunov structure and follows from Lemma \ref{lemma3.2}. The proof of (H3)-(H4) are completely similar to that of (2.3)-(2.4) in [13], except changing the exponent $\sigma$. So we omit the proof.

Since (H1)-(H5) hold, completely similar to the proof in \cite[Section 3]{DO}, we can conclude the proof.
\end{proof}

\section*{Acknowledgements}

This work is supported by NSFC Grants 11871132, 11925102, and Dalian High-level Talent Innovation Program (Grant 2020RD09).

\end{document}